\newtheorem{thm}{Theora}[section]
\newtheorem{Theo}[thm]{Theorem}
\newtheorem{Cor}[thm]{Corollary}
\newtheorem{Lem}[thm]{Lemma}
\newtheorem{Prop}[thm]{Proposition}
\newenvironment{theorem*}[1]{\smallskip\noindent{\bf #1.}\rm}{\medskip}
\newenvironment{Proof}{\smallskip\noindent{\bf Proof.}\rm}
{\hfill $\Box$\medskip}
\newenvironment{proof}{\smallskip\noindent{\bf Proof}\rm}
{\hfill $\Box$\medskip}
\renewcommand\({\left(}
\renewcommand\){\right)}
\newcommand\la{\lambda}
\newcommand{\be}{\begin{equation}}
\newcommand{\ee}{\end{equation}}
\newcommand{\ba}{\begin{array}}
\newcommand{\ea}{\end{array}}
\newcommand{\bea}{\begin{eqnarray*}}
\newcommand{\eea}{\end{eqnarray*}}
\newcommand{\bean}{\begin{eqnarray}}
\newcommand{\eean}{\end{eqnarray}}
\newcommand\D{{Dim}}
\makeatletter \@addtoreset{equation}{section}
\begin{document}
\title{ Null Boundary Controllability Of A One-dimensional Heat Equation With An Internal
 Point Mass And Variable Coefficients}
\author{Jamel Ben Amara \thanks{Facult\'e des Sciences de Tunis, D\'{e}partement de Math\'{e}matiques, Laboratoire d'Ing\'{e}nierie Math\'{e}matique,
Ecole Polytechnique de Tunisie, Universit\'{e} de Carthage, Avenue
de la R\'{e}publique, Bp 77, 1054 Amilcar, Tunisia,
jamel.benamara@fsb.rnu.tn.}~~~~~~~~~~Hedi Bouzidi \thanks{Facult\'e
des Sciences de Tunis, D\'{e}partement de Math\'{e}matiques and
Laboratoire d'Ing\'{e}nierie Math\'{e}matique, Ecole Polytechnique
de Tunisie~, Tunisia, bouzidihedi@yahoo.fr.}}
\date{}
\maketitle {\bf Abstract:} In this paper we consider a linear hybrid
system which composed by two non-homogeneous rods connected by a
point mass and generated by the equations\bea\left\{
 \begin{array}{ll}
   \rho_{1}(x)u_{t}=(\sigma_{1}(x)u_{x})_{x}-q_{1}(x)u,& x\in(-1,0),~t>0, \\
   \rho_{2}(x)v_{t}=(\sigma_{2}(x)v_{x})_{x}-q_{2}(x)v,& x\in(0,1), ~~~t>0, \\
   u(0,t)=v(0,t)=z(t),&t>0,\\
   M z_{t}(t)=\sigma_{2}(0)v_{x}(0,t)-\sigma_{1}(0)u_{x}(0,t),&t>0,
\end{array}
 \right.
\eea with Dirichlet boundary condition on the left end $x=-1$ and a
boundary control acts on the right end $x=1$. We prove that this
system is null controllable with Dirichlet or Neumann boundary
controls. Our approach is mainly based on a detailed spectral
analysis together with the moment method. In particular, we show
that the associated spectral gap in both cases (Dirichlet or Neumann
boundary controls) are positive without further conditions on the
coefficients $\rho_{i}$, $\sigma_{i}$ and $q_{i}$ $(i=1,2)$ other
than the regularities.\\

{\bf Keywords.} Heat equation, nonhomogeneous, point masses,
boundary control, moments.\\

{\bf AMS subject classification.} 35K05, 93B05, 93B55, 93B60.
%%%%%%%%%%%%%%%%%%%%%%%%%%%%%%%%%%%%%%%%%%%%%%%%%%%%%%%%%%%%%%%%%%%%%%%%%%%%%%%%%%%%%%%%%%%%%%%%%%%%%
%%%%%%%%%%%%%%%%%%%%%%%%%%%%%%%%%%%%%%%%%%%%%%%%%%%%%%%%%%%%%%%%%%%%%%%%%%%%%%%%%%%%%%%%%%%%%%%%%%%%%
\section{Introduction}
The null controllability of parabolic equations has been extensively
investigated for several decades. After the pioneering works by D.
Russell and H. Fattorini \cite{DH, DH1}, there is a significant
progress in the $N$-dimensional case by using Carleman estimates, see
in particular \cite{AO}. The more recent developments of the theory
are concerned with degenerate coefficients \cite{PPJ1, PPJ},
discontinuous coefficients \cite{GL, AYJ, EE}, or singular
coefficients see in particular \cite{JE}.\\
In this paper we consider a one-dimensional linear hybrid system
which composed by two non-homogeneous rods connected at $x=0$ by a
point mass. We assume that the first rod occupies the interval
$(-1,0)$ and the second one occupies the interval $(0,1)$. The
temperature of the first and the second rod will be respectively
presented by the functions \bea
u&=&u(x,t),~~x\in (-1,0),~~t>0,\\
v&=&v(x,t),~~x\in (0,1),~~~~t>0. \eea The position of the mass $M >
0$ attached to the rods at the point $x = 0$ is denoted by the
function $z = z(t)$ for $t > 0$. The equations modeling the dynamic
of this system are the followings \bean\label{a} \left\{
 \begin{array}{ll}
   \rho_{1}(x)u_{t}=(\sigma_{1}(x)u_{x})_{x}-q_{1}(x)u,& x\in(-1,0),~t>0, \\
   \rho_{2}(x)v_{t}=(\sigma_{2}(x)v_{x})_{x}-q_{2}(x)v,& x\in(0,1),~~~~t>0, \\
   u(0,t)=v(0,t)=z(t),& t>0,\\
   M z_{t}(t)=\sigma_{2}(0)v_{x}(0,t)-\sigma_{1}(0)u_{x}(0,t), &t>0, \\
u(-1,t)=0,
\end{array}
 \right.
\eean with either Dirichlet boundary control
\be\label{b}v(1,t)=h(t),~t>0,\ee or Neumann boundary control
\be\label{c} v_{x}(1,t)=h(t),~~t>0.\ee In System \eqref{a} the first
two equations are the one-dimensional heat equation. The third
equation imposes the continuity of the three components of the
system at $x =0$. The fourth equation describes the change in
temperature of the point mass at $x=0$. The coefficients
$\rho_{i}(x)$ and $\sigma_{i}(x)$ $(i=1,2)$ represent respectively
the density and thermal conductivity of each rod. The potentials are
assumed positively and denoted by the functions $q_{1}(x)$ and
$q_{2}(x)$. Similar hybrid systems involving strings and beams with
point masses have been studied in the context of controllability
(see e.g.,\cite{CAS, Beam1,E.Z}). Throughout this paper, we assume
that the coefficients $\rho_{i}$, $\sigma_{i}$ and $q_{i}$ $(i=1,2)$
are uniformly positive such that
\bean&\rho_{1},~\sigma_{1} \in H^{2}(-1,0),~q_{1}\in C(-1,0),\label{i1}\\
&\rho_{2},~\sigma_{2} \in H^{2}(0,1),~q_{2}\in
C(0,1).\label{i2}\eean In order to determine the solution of
Systems \eqref{a}-\eqref{b} and \eqref{a}-\eqref{c} in an unique
way, we have to add some initial conditions at time $t=0$ that will
be represented by \be\label{d} \left\{
\begin{array}{l}
  u(x,0)=u^{0}(x),~~x\in(-1,0), \\
  v(x,0)=v^{0}(x),~~x\in(0,1), \\
  z(0)=z^{0},\\
\end{array}
 \right.
\ee where the triple $\{u^{0}, v^{0},z^{0}\}$ will be given in an
appropriately defined function space. According to the results in
\cite{HM1, J.M}, the solutions of the System \eqref{a} with the
homogenous Dirichlet boundary condition \be v(1,t)=0,~~t>0,
\label{w1}\ee can be regarded as weak limits of solutions of a heat
equations with densities $\rho_{1}(x)$, $\rho_{2}(x)$ on the
intervals $(-1,-\epsilon)$ and $(\epsilon,1)$, respectively
and with the density $\frac{1}{2\epsilon}$ on the interval $(-\epsilon,\epsilon)$.\\
Note that when $M=0$, we recover the continuity condition of $u_{x}$
at $x=0$ and the classical heat equation with variable coefficients
occupying the interval $(-1,1)$ without point mass. In this context,
the question of the null controllability of Problems
\eqref{a}-\eqref{b} and \eqref{a}-\eqref{c} (for $M=0$) have been
treated in the seminal papers \cite{DH,DH1} and also \cite{EE,AE}
for $q=0$, $\sigma=1$ and some additional conditions on the density
$\rho(x)$. Recently, Hansen and Martinez \cite{HM} studied the
boundary controllability of Systems \eqref{a}-\eqref{b} and
\eqref{a}-\eqref{c} in the case of constant coefficients
$\rho_{i}(x)\equiv\sigma_{i}(x)\equiv1$, $q_{i}(x)\equiv
0,~~(i=1,2)$ and $M=1$. They proved the null boundary
controllability of Problems \eqref{a}-\eqref{b} and
\eqref{a}-\eqref{c} by using the moment
method.\\
In this paper we prove the null controllability of Systems
\eqref{a}-\eqref{b} and \eqref{a}-\eqref{c} at any time $T>0$. Our
approach is essentially based on a precise computation of the
associated spectral gap together with the moment method. More
precisely, we show that the sequence of eigenvalues
$(\lambda_{n})_{n\in\mathbb{N}^{*}}$ and
$(\nu_{n})_{n\in\mathbb{N}^{*}}$ associated with systems
\eqref{a}-\eqref{b} and \eqref{a}-\eqref{c}, respectively, satisfy
the gap conditions $\la_{n+1}-\lambda_{n}\geq\delta_{1}>0 $,
$\nu_{n+1}-\nu_{n}\geq\delta_{2}>0,~n\geq1$, without further
conditions on $\rho_{i}$, $\sigma_{i}$ and $q_{i}$, $(i=1,2)$ other
than the regularities. In the process of the computation of the
spectral gap, we establish an interpolation formula between the
eigenvalues of System \eqref{a}-\eqref{b} and the eigenvalues of the
regular problem \eqref{a}-\eqref{b} for $M=0$. We think that this
result can be useful for other problems related to Systems
\eqref{a}-\eqref{b} and
\eqref{a}-\eqref{c} without controls.\\
The rest of the paper is divided in the following way: In section 2
we associate to System \eqref{a}-\eqref{w1} a self-adjoint operator
defined in a well chosen Hilbert space. Moreover, we give some
results concerning the well-posedness of system \eqref{a}
with the homogenous Dirichlet boundary condition \eqref{w1}.
In the next section we establish the asymptotic properties of the associated
spectral gap and the asymptotes of the eigenfunctions. In Section 4
we reduce the control problem \eqref{a}-\eqref{b} to a moment problem and we prove the null boundary
controllability of System \eqref{a}-\eqref{b}. Finally, in Section 5
we extend our results to the case of Neumann boundary control
\eqref{a}-\eqref{c}.
%%%%%%%%%%%%%%%%%%%%%%%%%%%%%%%%%%%%%%%%%%%%%%%%%%%%%%%%%%%%%%%%%%%%%%%%%%%%%%%%%%%%%%%%%%%%%%%%%%%%%%%%%%%
%%%%%%%%%%%%%%%%%%%%%%%%%%%%%%%%%%%%%%%%%%%%%%%%%%%%%%%%%%%%%%%%%%%%%%%%%%%%%%%%%%%%%%%%%%%%%%%%%%%%%%%%%%%
\section{ Operator Framework And Well-posedness }\label{s1}
In this section we investigate the well-posedness of System
(\ref{a}) with the homogeneous Dirichlet boundary
condition \eqref{w1}. %or Neumann boundary condition
%\be\label{w2} v_{x}(1,t)=0.\ee
In order to state the main results of this section, it is convenient
to introduce the following spaces \bean
&&\mathcal{V}_{1}=\{u\in H^{1}(-1,0)~~|~~ u(-1)=0\}\label{w3}\\
&&\mathcal{V}_{2}=\{v\in H^{1}(0,1)~~|~~ v(1)=0\}\nonumber\\
&&\mathcal{V}=\{(u,v)\in \mathcal{V}_{1}\times
\mathcal{V}_{2}~~|~~u(0)=v(0)\}\nonumber\eean endowed with the norms
\bea&&\|u\|^{2}_{\mathcal{V}_{1}}=\int_{-1}^{0}|u_{x}(x)|^{2}dx,\\
&&\|v\|^{2}_{\mathcal{V}_{2}}=\int_{0}^{1}|v_{x}(x)|^{2}dx,\\
&&\|(u,v)\|^{2}_{\mathcal{V}}=\|u\|^{2}_{\mathcal{V}_{1}}+\|v\|^{2}_{\mathcal{V}_{2}}.\eea
Let us consider the following closed subspace of $\mathcal{V} \times
\mathbb{R}$
\begin{equation}\label{v}
\mathcal{W}=\{(u,v,z)\in \mathcal{V} \times \mathbb{R}:
u(0)=v(0)=z\},
\end{equation}
equipped with norm
$\|(u,v,z)\|^{2}_{\mathcal{W}}=\|(u,v)\|^{2}_{\mathcal{V}}$.
%In the Neumann case \eqref{w2},
%replace the definition of $\mathcal{V}_{2}$ in \eqref{w3} by $H^{1}(0,1)$
%and otherwise the space $\mathcal{W}$ is defined the same way.
Let us define the Hilbert space
\begin{equation}\label{h}
\mathcal{H}=L^{2}(-1,0)\times L^{2}(0,1)\times \mathbb{R},
\end{equation}
with the scalar product
$\langle./.\rangle_{\mathcal{H}}$ defined by\\
for all $Y_{1}=(u_{1},v_{1},\alpha_{1})^{t}$ and
$Y_{2}=(u_{2},v_{2},\alpha_{2})^{t}\in \mathcal{H}$, where $^{t}$
denotes the transposition, we have
\begin{equation}\label{dv}
\langle Y_{1},
Y_{2}\rangle_{\mathcal{H}}=\int_{-1}^{0}u_{1}(x)u_{2}(x)\rho_{1}(x)dx+
\int_{0}^{1}v_{1}(x)v_{2}(x)\rho_{2}(x)dx+M\alpha_{1}\alpha_{2}.
\end{equation}
It is easy to show that the space $\mathcal{W}$ is densely and
continuously embedded in the space $\mathcal{H}$. In the sequel we
introduce the operator $\mathcal{A}$ defined in $\mathcal{H}$ by
setting
\begin{equation}\label{est}
\mathcal{A}Y=\begin{cases} \frac{1}{\rho_{1}}(-(\sigma_{1}u')'+q_{1}u),~~~~&x\in[-1,0],\\
\frac{1}{\rho_{2}}(-(\sigma_{2}v')'+q_{2}v),~~~&x\in[0,1],\\
\frac{1}{M}(\sigma_{1}(0)u'(0)-\sigma_{2}(0)v'(0)),~~&x=0,
\end{cases}
\end{equation}
where $Y=(u,v,z)^{t}$. The domain $D(\mathcal{A})$ of $\mathcal{A}$
is dense in $\mathcal{H}$ and is given by
$$
D(\mathcal{A})=\{Y=(u, v, z)\in \mathcal{W} ~:~ (u,v)\in
H^{2}(-1,0)\times H^{2}(0,1)\}.
$$
%and in the Neumann case \eqref{w2} by
%$$
%D(\mathcal{A})=\{Y=(u, v, z)\in \mathcal{W}~:~ (u,v)\in
%H^{2}(-1,0)\times H^{2}(0,1),~~v_{x}(1)=0\}.
%$$
\begin{Prop}\label{rr}
The linear operator $\mathcal{A}$ is a positive and a self-adjoint such that $\mathcal{A}^{-1}$ is compact. Moreover,
$\mathcal A$ is the infinitesimal generator of a strongly continuous
semigroup $(\mathbf{S}_{t})_{t\geq0}$.
\end{Prop}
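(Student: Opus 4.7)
The plan is to realize $\mathcal{A}$ as the operator associated, via the Friedrichs/Kato representation theorem, with a coercive, bounded, symmetric sesquilinear form on $\mathcal{W}$; compactness of $\mathcal{A}^{-1}$ then follows from Rellich--Kondrachov, and semigroup generation is automatic once self-adjointness and positivity are in hand.

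First, I would introduce the form $a:\mathcal{W}\times\mathcal{W}\to\mathbb{R}$,
$$
a(Y_1,Y_2)=\int_{-1}^{0}\bigl(\sigma_1 u_1' u_2' + q_1 u_1 u_2\bigr)\,dx+\int_{0}^{1}\bigl(\sigma_2 v_1' v_2' + q_2 v_1 v_2\bigr)\,dx,
$$
for $Y_i=(u_i,v_i,z_i)\in\mathcal{W}$. The point-mass coupling is hard-wired in $\mathcal{W}$ through the constraint $u(0)=v(0)=z$, so $M$ does not enter $a$ directly; it appears only as the weight of the $\mathbb{R}$-component in \eqref{dv}. Under the regularity hypotheses \eqref{i1}--\eqref{i2} and the uniform positivity of $\rho_i,\sigma_i,q_i$, combined with the Poincar\'e inequality on $\mathcal{V}_1,\mathcal{V}_2$ (available thanks to $u(-1)=v(1)=0$), one checks that $a$ is symmetric, continuous and coercive on $\mathcal{W}$, with $a(Y,Y)\geq c\|Y\|^{2}_{\mathcal{W}}$ for some $c>0$.

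Second, the classical representation theorem for closed symmetric positive forms produces a unique positive self-adjoint operator $\tilde{\mathcal{A}}$ on $\mathcal{H}$ with $a(Y,\Phi)=\langle\tilde{\mathcal{A}}Y,\Phi\rangle_{\mathcal{H}}$ for $Y\in D(\tilde{\mathcal{A}})$, $\Phi\in\mathcal{W}$. To identify $\tilde{\mathcal{A}}$ with $\mathcal{A}$, I would integrate by parts: for $Y\in D(\mathcal{A})$ and $\Phi=(\phi_1,\phi_2,\phi_3)\in\mathcal{W}$,
$$
a(Y,\Phi)=\int_{-1}^{0}\bigl[-(\sigma_1 u')'+q_1 u\bigr]\phi_1\,dx+\int_{0}^{1}\bigl[-(\sigma_2 v')'+q_2 v\bigr]\phi_2\,dx+\bigl[\sigma_1(0)u'(0)-\sigma_2(0)v'(0)\bigr]\phi_3,
$$
using $\phi_1(-1)=\phi_2(1)=0$ and $\phi_3=\phi_1(0)=\phi_2(0)$. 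Comparing with \eqref{est} and \eqref{dv}, the factor $1/M$ in $(\mathcal{A}Y)_3$ cancels the weight $M$ in the $\mathbb{R}$-factor of the inner product, so the boundary term equals $M\,(\mathcal{A}Y)_3\,\phi_3$ and $a(Y,\Phi)=\langle\mathcal{A}Y,\Phi\rangle_{\mathcal{H}}$, giving $\mathcal{A}\subset\tilde{\mathcal{A}}$. The reverse inclusion is obtained by the usual localization argument: testing $a(Y,\cdot)$ against $\Phi$ supported separately in $(-1,0)$ and in $(0,1)$ forces $u\in H^{2}(-1,0)$, $v\in H^{2}(0,1)$, after which varying $\phi_3$ recovers the jump relation and places $Y$ in $D(\mathcal{A})$.

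Finally, the injection $\mathcal{W}\hookrightarrow\mathcal{H}$ is compact by Rellich--Kondrachov applied on each subinterval (the $\mathbb{R}$-factor being finite-dimensional), so $\mathcal{A}^{-1}:\mathcal{H}\to\mathcal{H}$ is compact. Semigroup generation is then immediate: a positive self-adjoint operator satisfies the Hille--Yosida hypotheses and one obtains $\mathbf{S}_t=e^{-t\mathcal{A}}$ via the spectral theorem. The only real subtlety, which I expect to be the main (though still routine) obstacle, is the bookkeeping of the boundary term at $x=0$ in the identification step, where the constraint $u(0)=v(0)=z$ in $\mathcal{W}$ must combine correctly with the $M$-weighting in \eqref{dv} to reproduce the point-mass equation; everything else reduces to standard Hilbert-space and Sobolev-embedding machinery.
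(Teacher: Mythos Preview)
Your proposal is correct and more complete than the paper's own argument, but the route is genuinely different. The paper proceeds directly: it computes $\langle\mathcal{A}Y,Y\rangle_{\mathcal{H}}$ by integration by parts to obtain positivity and symmetry, then asserts that self-adjointness follows from $\mathrm{Ran}(\mathcal{A}-iI)=\mathcal{H}$ (a step it does not actually carry out), and finally invokes the compact embedding $\mathcal{W}\hookrightarrow\mathcal{H}$ for compactness of $\mathcal{A}^{-1}$. You instead realize $\mathcal{A}$ as the operator associated with a closed coercive symmetric form via the Kato/Friedrichs representation theorem, which delivers self-adjointness automatically and reduces the remaining work to the (routine) identification $\tilde{\mathcal{A}}=\mathcal{A}$. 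Your approach is cleaner and fills the gap left by the paper, at the cost of invoking slightly heavier abstract machinery; the paper's direct approach would be shorter if the range condition were actually verified, but as written it is incomplete. Both arguments share the same endgame for compactness and semigroup generation.
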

\begin{Proof}
Let $Y=(u,v,z)^{t}\in D(\mathcal{A})$, then by a simple integration by parts we have\\
\bean \langle \mathcal{A}Y,
Y\rangle_{\mathcal{H}}&=&\int_{-1}^{0}(-(\sigma_{1}(x)u^{'})^{'}+q_{1}(x)u)u
dx
+\int_{0}^{1}(-(\sigma_{2}(x)v^{'})^{'}+q_{2}(x)v)v dx+Mz_{t}z\nonumber\\
&=&\int_{-1}^{0}\sigma_{1}(x)|u^{'}|^{2}dx+q_{1}(x)|u|^{2}dx
+\int_{0}^{1}\sigma_{2}(x)|v^{'}|^{2}dx+q_{2}(x)|v|^{2}dx\label{w0}
\eean
 since $\sigma_{i}>0$ and $q_{i}>0$ (i=1,2) then $\langle
\mathcal{A}Y, Y\rangle_{\mathcal{H}}>0.$\\
It is clear that the quadratic form has real values so the linear
operator $\mathcal{A}$ is symmetric. In order to show that this
operator is self-adjoint it suffices to show that
$Ran(\mathcal{A}-iId)=\mathcal{H}$.\\
It is easy to show that the space $\mathcal{W}$ is continuously and
compactly embedded in the space $\mathcal{H}$, and hence, the
operator $\mathcal{A}^{-1}$ is compact in $\mathcal{H}$.
\end{Proof}\\
We consider the following spectral problem which arises by applying
separation of variables to System (\ref{a})-(\ref{w1})
\bean\label{M} \left\{
 \begin{array}{ll}
-(\sigma_{1}(x)u')'+q_{1}(x)u=\lambda \rho_{1}(x)u,~~&x\in (-1,0), \\
-(\sigma_{2}(x)v')'+q_{2}(x)v=\lambda \rho_{2}(x)v,~~&x\in(0,1), \\
u(-1)=v(1)=0,~u(0)=v(0), \\
\sigma_{1}(0)u'(0)-\sigma_{2}(0)v'(0)=\lambda M u(0).\\
\end{array}
 \right.
\eean
\begin{Lem}\label{sss}
The spectrum of System \eqref{M} is discrete. It consists of an
increasing sequence of positive and simple
eigenvalues $(\lambda_{n})_{n\in\mathbb{N}^{*}}$ tending to $+\infty$\\
$$0<\lambda_{1}<\lambda_{2}<.......<\lambda_{n}<.....\underset{n\rightarrow \infty}{\longrightarrow}\infty .$$
Moreover, the corresponding eigenfunctions
$(\widetilde{\phi}_{n}(x))_{n\in\mathbb{N^{*}}}$ form an orthonormal basis in
$\mathcal{H}$.
\end{Lem}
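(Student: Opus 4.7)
The plan is to deduce the bulk of the statement from the spectral theorem applied to the operator $\mathcal{A}$ of Proposition~\ref{rr}, and to treat simplicity of the eigenvalues by a direct ODE-theoretic argument.

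First, I would identify the spectral problem \eqref{M} with the abstract eigenvalue equation $\mathcal{A}Y=\lambda Y$ for $Y=(u,v,z)^{t}\in D(\mathcal{A})$: unfolding the definition \eqref{est} recovers the two interior equations, the inclusion $Y\in\mathcal{W}$ supplies the boundary conditions $u(-1)=v(1)=0$ and the matching $u(0)=v(0)=z$, and the third component of $\mathcal{A}Y=\lambda Y$ reproduces the transmission condition $\sigma_{1}(0)u'(0)-\sigma_{2}(0)v'(0)=\lambda M u(0)$. Proposition~\ref{rr} gives that $\mathcal{A}$ is self-adjoint with compact inverse, so the classical spectral theorem for compact self-adjoint operators on a Hilbert space yields a discrete sequence of real eigenvalues with no finite accumulation point, tending to $+\infty$, together with an orthonormal basis of associated eigenfunctions in $\mathcal{H}$.

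To upgrade non-negativity to strict positivity, I would reuse the integration-by-parts identity in the proof of Proposition~\ref{rr}: if $\mathcal{A}Y=0$ then $\int_{-1}^{0}(\sigma_{1}|u'|^{2}+q_{1}|u|^{2})\,dx+\int_{0}^{1}(\sigma_{2}|v'|^{2}+q_{2}|v|^{2})\,dx=0$, and since $q_{i}>0$ this forces $u\equiv v\equiv 0$ and hence $z=u(0)=0$. Thus $0\notin\sigma(\mathcal{A})$, every eigenvalue is strictly positive, and the sequence may be labelled $0<\lambda_{1}\le\lambda_{2}\le\dots\to+\infty$.

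The genuinely new work is the simplicity. I would fix $\lambda$ and let $u_{0}(\cdot,\lambda)$ denote the (unique up to a scalar) nontrivial solution of $-(\sigma_{1}u')'+q_{1}u=\lambda\rho_{1}u$ on $[-1,0]$ with $u_{0}(-1)=0$, and $v_{0}(\cdot,\lambda)$ the analogous nontrivial solution on $[0,1]$ with $v_{0}(1)=0$. Any eigenfunction of \eqref{M} must have the form $(au_{0},bv_{0},z)$, so the matching and transmission conditions reduce to the three scalar relations $au_{0}(0)=z$, $bv_{0}(0)=z$, $\sigma_{1}(0)au_{0}'(0)-\sigma_{2}(0)bv_{0}'(0)=M\lambda z$. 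A short case analysis on the pair $(u_{0}(0),v_{0}(0))$ then concludes: if both are nonzero, the first two relations express $b$ and $z$ in terms of $a$ and the third becomes a single scalar condition, so the eigenspace is one-dimensional; if exactly one of $u_{0}(0)$, $v_{0}(0)$ vanishes, Cauchy uniqueness for the second-order ODEs (which also guarantees $u_{0}'(0)\neq 0$ whenever $u_{0}(0)=0$, as $u_{0}$ is nontrivial) forces both coefficients to vanish, so $\lambda$ is not an eigenvalue; and if both vanish, then $z=0$ and one is left with the single constraint $\sigma_{1}(0)au_{0}'(0)=\sigma_{2}(0)bv_{0}'(0)$ on the pair $(a,b)$, again producing a one-dimensional eigenspace.

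The delicate point of the argument is this last degenerate case, where $\lambda$ is simultaneously a Dirichlet eigenvalue of the two one-sided operators on $(-1,0)$ and $(0,1)$; the count of parameters versus constraints works out exactly because $u_{0}'(0)$ and $v_{0}'(0)$ are both nonzero there, which is again a consequence of Cauchy uniqueness for second-order linear ODEs.
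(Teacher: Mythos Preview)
Your proposal is correct and follows essentially the same approach as the paper: the discreteness, positivity, and completeness of the eigenfunctions are drawn from Proposition~\ref{rr} and the spectral theorem (the paper simply says ``Here we have only to prove the simplicity''), and your simplicity argument is the same ODE-theoretic case analysis on whether $u_0(0,\lambda)$ and $v_0(0,\lambda)$ vanish, with the same use of Cauchy uniqueness to handle the degenerate cases. Your write-up is slightly more explicit about the parts the paper leaves implicit, but the content is the same.
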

\begin{Proof}
Here we have only to prove the simplicity of the eigenvalues
$\la_{n}$ for all $n\in\mathbb{N^{*}}$. Let $u(x,\la)$ and
$v(x,\la)$ be the solutions of the initial value problems
\begin{equation}\label{g6}
\left\{
\begin{array}{lll}
-(\sigma_{1}(x)u')'+q_{1}(x)u=\lambda \rho_{1}(x) u, ~~x\in (-1,\,0),\\
u(-1)=0,u'(-1)=1,
\end{array}
\right.
\end{equation} and
 \begin{equation}\label{g7}
\left\{
\begin{array}{lll}
-(\sigma_{2}(x)v')'+q_{2}(x)v=\lambda \rho_{2}(x)v,~~x\in (0,\,1), \\
v(1)=0,v'(1)=-1,
\end{array}
\right.
\end{equation}
respectively. Let $\lambda$ be an eigenvalue of the operator
$\mathcal{A}$ and $E_{\lambda}$ be the corresponding eigenspace. For
any eigenfunction $\phi (x,\lambda)$ of $E_{\lambda}$, $\phi
(x,\lambda)$ can be written in the form
\begin{equation} \phi(x,\la)= \left\{
\begin{array}{ll}
c_{1}u(x,\la),&-1\leq x\leq 0,\\
c_{2}v(x,\la),&0\leq x\leq 1,\\
\end{array}
\right.
\end{equation}
where $c_{1}$ and $c_{2}$ are two constants. The second condition at
$x=0$ in \eqref{M} is equivalent to
$$c_{1}u(0,\la)=c_{2}v(0,\la).$$
If $u(0,\la)\neq 0$ and $v(0,\la)\neq 0$, then $c_{1}=c_{2}
\frac{v(0,\la)}{u(0,\la)}$
and $\D(E_{\lambda})=1$.\\
If $u(0,\la)=0$ and $v(0,\la)\neq0$ (or $u(0,\la)\neq0$ and $v(0,\la)=0$), then
\begin{equation*} \phi(x,\la)= \left\{
\begin{array}{lll}
c_{1}u(x,\la),~&-1\leq x\leq 0,\\
0 ~&0\leq x\leq 1.\\
\end{array}
\right.
\end{equation*}
It is clear from the last condition in Problem \eqref{M} that $\la$ is not an eigenvalue.\\
Now, if $u(0,\la)=v(0,\la)=0$, then from the last condition in \eqref{M} we have
$$c_{1}\sigma_{1}(0)u'(0,\la)-c_{2}\sigma_{2}(0)v'(0,\la)=0.$$
Since $u'(0,\lambda) \neq 0$ and $v'(0,\lambda) \neq 0$, then
$c_{1}=c_{2}\frac{\sigma_{2}(0)v'(0,\lambda)}{\sigma_{1}(0)u'(0,\lambda)}$ and $\D(E_{\lambda})=1$.\\
Since the operator $\mathcal{A}$ is self-adjoint then the algebraic
multiplicity of $\la$ is equal to one.
\end{Proof}\\
Obviously, the Cauchy problem (\ref{a})-(\ref{w1}) can be rewritten
in the abstract form
\begin{equation}\label{jik}
\dot{Y}(t)=-\mathcal{A}Y(t),~~Y^{0}=Y(0),~~ t>0,
\end{equation}
where $\mathcal{A}$ is defined in (\ref{est}) and
$Y^{0}=(u(x,0),v(x,0),z(0))^{t}$. As a consequence of Proposition \ref{rr}
and the Lumer-Phillips theorem (e.g., \cite[Theorem A.4]{J}),
we have the following existence and uniqueness result for the
problem (\ref{a})-(\ref{w1}):
\begin{Prop}
Let $Y^{0}=(u^{0},v^{0},z^{0})^t\in \mathcal{H}$. Then the Cauchy
problem (\ref{a})-(\ref{w1}) has a unique solution $Y=(u,v,z)^t\in
C([0,\infty), \mathcal{H})$.% Moreover, if $Y^{0}\in
%D(\mathcal{A})$, then $Y\in C^{0}([0,\infty), D(\mathcal{A}))$.
\end{Prop}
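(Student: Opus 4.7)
The plan is to leverage everything already established in Proposition \ref{rr} and reduce the statement to an application of standard semigroup theory. First I would record that the Cauchy problem \eqref{a}--\eqref{w1} has been rewritten in abstract form as $\dot Y(t)=-\mathcal{A}Y(t)$, $Y(0)=Y^{0}$, in \eqref{jik}. The task is therefore to show that $-\mathcal{A}$ generates a $C_{0}$-semigroup on $\mathcal{H}$, because then $Y(t)=\mathbf{S}_{t}Y^{0}$ is the unique mild solution and the continuity $Y\in C([0,\infty),\mathcal{H})$ is built into the definition of a $C_{0}$-semigroup.

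Next I would invoke Proposition \ref{rr}: the operator $\mathcal{A}$ is self-adjoint, positive, with compact resolvent and dense domain $D(\mathcal{A})\subset\mathcal{H}$. Positivity and symmetry immediately give that $-\mathcal{A}$ is dissipative, since $\mathrm{Re}\langle -\mathcal{A}Y,Y\rangle_{\mathcal{H}}=-\langle \mathcal{A}Y,Y\rangle_{\mathcal{H}}\leq 0$ for all $Y\in D(\mathcal{A})$. Moreover, positivity plus self-adjointness ensures that $\lambda I+\mathcal{A}$ is boundedly invertible for every $\lambda>0$, so $\mathrm{Ran}(\lambda I+\mathcal{A})=\mathcal{H}$, i.e.\ the range condition of the Lumer--Phillips theorem holds.

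Then the Lumer--Phillips theorem (as cited in \cite[Theorem A.4]{J}) applies and yields that $-\mathcal{A}$ generates a strongly continuous semigroup of contractions $(\mathbf{S}_{t})_{t\geq 0}$ on $\mathcal{H}$. For any $Y^{0}\in\mathcal{H}$ the function $Y(t):=\mathbf{S}_{t}Y^{0}$ therefore belongs to $C([0,\infty),\mathcal{H})$, satisfies the initial condition $Y(0)=Y^{0}$, and is the unique mild solution of \eqref{jik}; if $Y^{0}\in D(\mathcal{A})$ it is in fact a classical solution in $C([0,\infty),D(\mathcal{A}))\cap C^{1}([0,\infty),\mathcal{H})$.

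Essentially all the analytic difficulty has already been absorbed into Proposition \ref{rr} (self-adjointness, positivity, compactness of the resolvent), which in turn rested on a careful integration by parts at the interface $x=0$ that accounts for the point-mass term $Mz$. The only step that still needs a brief comment is the range condition; this is the possible obstacle, but it follows at once from the self-adjointness together with positivity, so no separate variational argument is needed. Hence the proof reduces to citing Proposition \ref{rr} and Lumer--Phillips, exactly as anticipated in the paragraph preceding the statement.
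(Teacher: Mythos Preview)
Your proposal is correct and follows exactly the approach indicated in the paper: the paper does not supply a separate proof of this proposition but presents it as an immediate consequence of Proposition~\ref{rr} together with the Lumer--Phillips theorem \cite[Theorem~A.4]{J}. Your write-up simply spells out the dissipativity and range-condition verifications that make this citation work, so nothing further is needed.
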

%%%%%%%%%%%%%%%%%%%%%%%%%%%%%%%%%%%%%%%%%%%%%%%%%%%%%%%%%%%%%%%%%%%%%%%%%%%%%%%%%%%%%%%%%%%%%%%%%%%%%%%%%%%%%%%%%%
%%%%%%%%%%%%%%%%%%%%%%%%%%%%%%%%%%%%%%%%%%%%%%%%%%%%%%%%%%%%%%%%%%%%%%%%%%%%%%%%%%%%%%%%%%%%%%%%%%%%%%%%%%%%%%%%%%
\section{Spectral Gap And Asymptotic Proprieties }\label{GD1}
In this section we investigate the asymptotic behavior of the
spectral gap $\la_{n+1}-\la_{n}$ for large $n$. Namely, we enunciate
the following result:
\begin{Theo}\label{T1}
For all $n\in \mathbb{N^{*}}$, there is a constant $\delta_{1}>0$
such that the sequence of eigenvalues $(\la_{n})_{n\geq1}$ of the
spectral problem \eqref{M} satisfy the asymptotic \bean
\la_{n+1}-\la_{n}\geq\delta_{1},~n\in \mathbb{N}^{*}\label{g26}.
\eean
\end{Theo}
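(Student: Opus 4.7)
The plan is to realize the eigenvalues of \eqref{M} as the zeros of an explicit entire function, obtain sharp asymptotics for that function via a Liouville reduction, and then deduce a uniform lower bound on consecutive roots from a local analysis at the potentially coalescing zeros of the leading asymptotic. Using the Cauchy solutions $u(\cdot,\lambda)$, $v(\cdot,\lambda)$ of \eqref{g6}--\eqref{g7}, every non-trivial solution of \eqref{M} has the form $c_{1}u$ on $[-1,0]$ and $c_{2}v$ on $[0,1]$. The transmission and mass conditions at $x=0$ produce a $2\times 2$ homogeneous system in $(c_{1},c_{2})$ whose determinant is
\begin{equation*}
F(\lambda):=\sigma_{1}(0)u'(0,\lambda)v(0,\lambda)-\sigma_{2}(0)v'(0,\lambda)u(0,\lambda)-\lambda M\,u(0,\lambda)v(0,\lambda),
\end{equation*}
so the $\lambda_{n}$ are exactly the zeros of $F$, simple by Lemma \ref{sss}. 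In particular $\lambda_{n+1}-\lambda_{n}>0$ for every finite $n$, and the only non-trivial work is to bound this gap away from zero as $n\to\infty$.

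Thanks to the $H^{2}$ regularity \eqref{i1}--\eqref{i2}, the Liouville change of variable $\xi_{i}=\int\!\sqrt{\rho_{i}/\sigma_{i}}$ together with the unknown transformations $\tilde u=(\sigma_{1}\rho_{1})^{1/4}u$ and $\tilde v=(\sigma_{2}\rho_{2})^{1/4}v$ turns the equations in \eqref{M} into canonical Schr\"odinger equations $-\tilde u''+\tilde q_{i}\tilde u=\lambda\tilde u$ on $(0,\ell_{i})$ with $\ell_{i}=\int\!\sqrt{\rho_{i}/\sigma_{i}}$. Classical Sturm--Liouville asymptotics (of P\"oschel--Trubowitz type) for the Cauchy solutions, pulled back through this transformation, give for $s=\sqrt{\lambda}\to\infty$
\begin{equation*}
u(0,\lambda)=\kappa_{1}\frac{\sin s\ell_{1}}{s}+O(s^{-2}),\qquad \sigma_{1}(0)u'(0,\lambda)=\hat\kappa_{1}\cos s\ell_{1}+O(s^{-1}),
\end{equation*}
and analogous formulas for $v(0,\lambda)$ and $\sigma_{2}(0)v'(0,\lambda)$, with positive constants $\kappa_{i},\hat\kappa_{i}$ depending only on the endpoint data of $\rho_{i},\sigma_{i}$.

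Substituting into $F$ yields
\begin{equation*}
F(\lambda)=-M\kappa_{1}\kappa_{2}\sin(s\ell_{1})\sin(s\ell_{2})+\frac{\hat\kappa_{1}\kappa_{2}\cos(s\ell_{1})\sin(s\ell_{2})+\kappa_{1}\hat\kappa_{2}\sin(s\ell_{1})\cos(s\ell_{2})}{s}+O(s^{-2}),
\end{equation*}
so the zeros of $F$ accumulate at the union $\{k\pi/\ell_{1}\}\cup\{k\pi/\ell_{2}\}$. Away from coincidences between these two families, the nearest zero of $F$ is simple and consecutive such zeros are separated by at least $\pi/\max(\ell_{1},\ell_{2})$ in $s$, giving a $\lambda$-gap of order $\sqrt{\lambda_{n}}$. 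At a coincidence point $s_{0}$ with $s_{0}\ell_{1},s_{0}\ell_{2}\in\pi\mathbb{Z}$, expanding $F$ to second order in $\tau=s-s_{0}$ reduces the equation $F=0$ locally to $-M\kappa_{1}\kappa_{2}\ell_{1}\ell_{2}\tau^{2}+(\hat\kappa_{1}\kappa_{2}\ell_{2}+\kappa_{1}\hat\kappa_{2}\ell_{1})\tau/s_{0}+O(s_{0}^{-2})=0$, whose two roots $\tau=0$ and $\tau=(\hat\kappa_{1}\kappa_{2}\ell_{2}+\kappa_{1}\hat\kappa_{2}\ell_{1})/(M\kappa_{1}\kappa_{2}\ell_{1}\ell_{2}\,s_{0})$ correspond to two eigenvalues of \eqref{M} separated by $2(\hat\kappa_{1}\kappa_{2}\ell_{2}+\kappa_{1}\hat\kappa_{2}\ell_{1})/(M\kappa_{1}\kappa_{2}\ell_{1}\ell_{2})+o(1)>0$. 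Taking the infimum over the two cases yields an asymptotic lower bound $\delta_{\infty}>0$; combined with the positive minimum gap of the finitely many small eigenvalues, this supplies the required uniform $\delta_{1}$.

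The main obstacle is precisely the coincidence case: one has to guarantee that the splitting of the two colliding roots does not shrink as $s_{0}\to\infty$. This non-vanishing is produced entirely by the mass term $\lambda M\,u(0)v(0)$ in $F$, which furnishes the nonzero quadratic coefficient $M\kappa_{1}\kappa_{2}\ell_{1}\ell_{2}$ in the local expansion. Carrying out this argument cleanly, without any Diophantine assumption on the ratio $\ell_{1}/\ell_{2}$, is exactly what the ``interpolation formula'' between the $\lambda_{n}$ and the eigenvalues of the $M=0$ problem announced in the introduction is designed to make quantitative.
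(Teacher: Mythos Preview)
Your route is genuinely different from the paper's. The paper never studies the entire characteristic determinant directly; instead it introduces the Herglotz--type quotient
\begin{equation*}
\lambda\ \longmapsto\ \frac{\sigma_{1}(0)u_{x}(0,\lambda)}{u(0,\lambda)}-\frac{\sigma_{2}(0)v_{x}(0,\lambda)}{v(0,\lambda)},
\end{equation*}
proves by a Wronskian identity that it is strictly decreasing from $+\infty$ to $-\infty$ on each interval $(\mu_{n},\mu_{n+1})$ between consecutive Dirichlet eigenvalues of the two half-problems (Lemma~\ref{Lg1}), and reads off the interlacing $\mu_{n}<\lambda_{n+1}<\lambda'_{n+1}<\mu_{n+1}$ (Corollary~\ref{cg1}). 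The gap then comes from a single mean-value estimate on $G=1/F$ giving $\lambda_{n+1}-\mu_{n}=O(1)$; asymptotics enter only to bound $G'$ at one intermediate point, never to locate two nearby roots simultaneously.

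Your direct asymptotic argument, as written, has a real gap. First, the dichotomy ``exact coincidence $s_{0}\ell_{1},s_{0}\ell_{2}\in\pi\mathbb{Z}$'' versus ``away from coincidences with $s$-gap $\geq\pi/\max(\ell_{1},\ell_{2})$'' is not exhaustive: for irrational $\ell_{1}/\ell_{2}$ the union $\{k\pi/\ell_{1}\}\cup\{j\pi/\ell_{2}\}$ has no uniform lower gap, so infinitely many $n$ are \emph{near}-coincidences and must be handled by the local expansion. Second, in that local expansion the three competing contributions---the quadratic $-A\tau^{2}$, the linear $(B/s_{0})\tau$, and the remainder $O(s_{0}^{-2})$---are all of the \emph{same} size $s_{0}^{-2}$ on the relevant scale $\tau\sim s_{0}^{-1}$. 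Consequently the $O(s_{0}^{-2})$ error can shift the two roots by an amount comparable to their putative separation, and the claimed $\lambda$-splitting $2B/A+o(1)$ does not follow from an $O(s^{-2})$ remainder alone. Making this rigorous would require the next term in the Cauchy-solution expansion so that the error becomes $o(s^{-2})$, and then the splitting constant depends on those lower-order data as well.

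Your final paragraph essentially concedes this by deferring to ``the interpolation formula announced in the introduction''. But that formula is exactly Corollary~\ref{cg1}, and it is obtained not through asymptotics but through the \emph{exact} monotonicity of the Weyl--Titchmarsh quotients. Once one has $\mu_{n}<\lambda_{n+1}<\mu_{n+1}$, a one-line mean-value bound on $1/F$ replaces the entire delicate root-splitting analysis you are attempting.
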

In order to prove this theorem, we establish some preliminary results. Let\\
$\Gamma=\{\mu_{n}\}_{1}^{\infty}=\{\eta_{j}\}_{1}^{\infty}\bigcup\{\eta'_{k}\}_{1}^{\infty}$
where $\eta_{j}$ and $\eta'_{k}$ are the eigenvalues of the
problems\begin{equation}\label{g1} \left\{
\begin{array}{lll}
-(\sigma_{1}(x)y')'+q_{1}(x)y=\lambda\rho_{1}(x)y, x\in (-1,\,0),\\
y(-1)=y(0)=0,
\end{array}
\right.
\end{equation}and
 \begin{equation}\label{g2}
\left\{
\begin{array}{lll}
-(\sigma_{2}(x)y')'+q_{2}(x)y=\lambda\rho_{2}(x)y, x\in (0,\,1),\\
y(0)=y(1)=0,
\end{array}
\right.
\end{equation}
respectively. Obviously, $\eta_{j}$ and $\eta'_{k}$ can be coincide.
Let \be\Gamma^{*}=\{\mu_{n}\in\Gamma~\backslash~
\eta_{j}=\eta'_{k},~j,~k\in\mathbb{N^{*}}\}. \label{g34}\ee
Note that if $\mu_{n}\in\Gamma^{*}$ (i.e., $u(0,\mu_n)=v(0,\mu_n)=0$), then $\mu_n$ is an eigenvalue of both
Problems \eqref{g1} and \eqref{g2}. In what follows
we suppose that  if $\mu_n\in \Gamma^{*}$, then $\mu_n=\mu_{n+1}$.   %then the
%order of the eigenvalue $\mu_{n}$ is given by
%$$n=\displaystyle\sum_{\eta_{j}<\mu_{n}}1+
%\displaystyle\sum_{\eta'_{k}<\mu_{n}}1-\displaystyle\sum_{\mu_{n}\in\Gamma^{*}}1.$$
We consider the following boundary value problem
\begin{equation}\label{g3}
\left\{
\begin{array}{lll}
-(\sigma_{1}(x)u')'+q_{1}(x)u=\lambda \rho_{1}(x) u, x\in (-1,\,0),\\
-(\sigma_{2}(x)v')'+q_{2}(x)v=\lambda \rho_{2}(x)v, x\in (0,\,1),\\
u(-1)=v(1)=0,\\
u(0)=v(0).\\
\end{array}
\right.
\end{equation}
It is clear that for $\la \in \mathbb{C}\backslash\ \Gamma^{*}$, the
set of solutions of Problem \eqref{g3} is one-dimensional subspace
which is generated by a solution of the form
\begin{equation}\label{g4}
\widetilde{U}(x,\la)= \left\{
\begin{array}{lll}
{v(0,\la)} u(x,\la),&-1\leq x\leq 0,\\
{u(0,\la)} v(x,\la),&0\leq x\leq 1,\\
\end{array}
\right.
\end{equation}
where $u(x,\la)$ and $v(x,\la)$ are the solutions of the initial
value problems \eqref{g6} and \eqref{g7}, respectively. Note that
$u(0,\la)\neq0$ and $v(0,\la)\neq0$ for $\la
\in(\mu_{n},\mu_{n+1})$, since otherwise $\la$ would be an
eigenvalue of one of Problems \eqref{g1} or \eqref{g2}. Let us
introduce  the variable complex function
$$F(\lambda)=\dfrac{\sigma_{1}(0)\widetilde{U}_{x}(0^-,\,\lambda)-
\sigma_{2}(0)\widetilde{U}_{x}(0^+,\,\lambda)}{\widetilde{U}(0,\,\lambda)},~~\la\in\mathbb{C}\backslash\Gamma,$$
which can be rewritten in the form \be
F(\lambda)=\frac{\sigma_{1}(0)v(0)u_{x}(0^-)-\sigma_{2}(0)u(0)v_{x}(0^+)}{u(0)v(0)},~~\la\in\mathbb{C}\backslash\Gamma.\label{g5}\ee
It is known in \cite[Chapter1]{BI}, $u(x,\la)$ and $v(x,\la)$ are
entire functions in $\la$ and continuous on the intervals $[-1,0]$
and $[0,1]$, respectively. Therefore $F(\la)$ is a
 meromorphic function. We will show below that its zeros and poles coincide with the
eigenvalues of the regular problem \eqref{M} (for $M=0$) and the
eigenvalues $\mu_{n}$, $n\geq1$, respectively. Moreover, the
solution of the equation \be F(\la)=M\la,\label{g14}\ee are the
eigenvalues $\la_{n}$, $n\geq 1$, of Problem \eqref{M}.
\begin{Lem}\label{Lg1}
The function $F(\lambda)$ is decreasing along the intervals
$(-\infty,\mu_{1})$ and $(\mu_{n}, \mu_{n+1})$, $n\geq1$ (with
$\mu_{n}\neq\mu_{n+1}$). Furthermore, it decreases from $+\infty$
to $-\infty$.
\end{Lem}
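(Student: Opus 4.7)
The plan is to decompose $F$ into two Weyl--Titchmarsh-type $m$-functions and analyse them separately. Setting
$$F_1(\lambda):=\frac{\sigma_1(0)\,u_x(0,\lambda)}{u(0,\lambda)},\qquad F_2(\lambda):=-\frac{\sigma_2(0)\,v_x(0,\lambda)}{v(0,\lambda)},$$
the formula \eqref{g5} gives $F=F_1+F_2$. Each $F_i$ is meromorphic in $\lambda$ with real poles located precisely at the zeros of its denominator, namely the Dirichlet eigenvalues $\{\eta_j\}$ of \eqref{g1} for $F_1$ and $\{\eta'_k\}$ of \eqref{g2} for $F_2$; together these form $\Gamma$. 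The convention that a common zero $\eta_j=\eta'_k$ is repeated (so $\mu_n=\mu_{n+1}$) ensures that every interval $(\mu_n,\mu_{n+1})$ with $\mu_n\neq\mu_{n+1}$ avoids the exceptional set $\Gamma^*$, so on each such interval $F$ is smooth.

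Monotonicity I would establish by the classical Sturm--Liouville trick. Writing $\dot u=\partial_\lambda u$, differentiating \eqref{g6} in $\lambda$, multiplying the original equation by $\dot u$ and the differentiated one by $u$, subtracting and integrating over $(-1,0)$ using the initial data of \eqref{g6} (which kills the boundary contribution at $x=-1$) yields
$$\sigma_1(0)\bigl[u_x(0,\lambda)\,\dot u(0,\lambda)-u(0,\lambda)\,\dot u_x(0,\lambda)\bigr]=\int_{-1}^{0}\rho_1(x)\,u(x,\lambda)^2\,dx.$$
Dividing by $u(0,\lambda)^2$ identifies the left-hand side with $-\partial_\lambda F_1(\lambda)$, so $\partial_\lambda F_1<0$ wherever $u(0,\cdot)\neq 0$. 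The entirely analogous calculation on $(0,1)$, based on \eqref{g7}, delivers $\partial_\lambda F_2(\lambda)=-\frac{1}{v(0,\lambda)^2}\int_{0}^{1}\rho_2(x)\,v(x,\lambda)^2\,dx<0$. Hence $F$ is strictly decreasing on each component of $\mathbb{R}\setminus\Gamma$.

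To fix the end behaviour, I would extract the sign of the residue at each pole directly from the same identity. At $\mu_n=\eta_j$ (with $v(0,\eta_j)\neq 0$), evaluating at $\lambda=\eta_j$ gives $\sigma_1(0)\,u_x(0,\eta_j)\,\dot u(0,\eta_j)=\int_{-1}^{0}\rho_1\,u(\cdot,\eta_j)^2>0$; since $u(0,\lambda)\sim(\lambda-\eta_j)\dot u(0,\eta_j)$ at a simple zero, this forces a positive residue and hence $F_1\to+\infty$ as $\lambda\to\eta_j^+$ and $F_1\to-\infty$ as $\lambda\to\eta_j^-$, while $F_2$ remains bounded near $\eta_j$. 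The roles are exchanged at the poles of $F_2$, and in either case combining with the strict monotonicity proves that $F$ decreases from $+\infty$ to $-\infty$ on every $(\mu_n,\mu_{n+1})$.

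For the leftmost interval $(-\infty,\mu_1)$ it remains to verify $\lim_{\lambda\to-\infty}F(\lambda)=+\infty$. I would invoke the Liouville/WKB asymptotics for the Cauchy solutions $u(\cdot,\lambda)$ and $v(\cdot,\lambda)$: as $\lambda\to-\infty$, both $u_x(0,\lambda)/u(0,\lambda)$ and $-v_x(0,\lambda)/v(0,\lambda)$ grow like $\sqrt{-\lambda\,\rho_i(0)/\sigma_i(0)}$, so each $F_i\to+\infty$. The most delicate step is the pole analysis: one has to know that $\dot u(0,\eta_j)\neq 0$ (simplicity of the zeros of $\lambda\mapsto u(0,\lambda)$, inherited from the simple Dirichlet spectrum of \eqref{g1}) and one has to rule out collapsed intervals by the multiplicity convention introduced around \eqref{g34}; the remainder of the argument is then an immediate combination of monotonicity with the computed signs of the residues.
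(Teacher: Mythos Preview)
Your proposal is correct and follows essentially the same route as the paper: a Lagrange/Wronskian identity gives $\partial_\lambda F<0$, the residue signs at the poles $\mu_n$ are determined from the same identity, and the WKB asymptotics handle $\lambda\to-\infty$. The only cosmetic difference is packaging: you split $F=F_1+F_2$ from the outset and read off the (positive) residues of each summand directly from your integral identity, whereas the paper first derives $\partial_\lambda F$ for the full $F$, then introduces $F_1,F_2$ only for the pole analysis, cites \cite{JA} for the positivity of the residues of $F_1$, and treats the double-pole case $u(0,\mu_n)=v(0,\mu_n)=0$ by a separate Taylor expansion---a case your additive decomposition covers automatically since both residues are positive.
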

\begin{Proof}
Let $(\la, \la')\in (\mu_{n}, \mu_{n+1})$
where $\la\neq\la'$ and $\widetilde{U}(x,\la)$,
$\widetilde{U}(x,\la')$ are two solutions of Problem \eqref{g3}.
Integrating by parts and taking into account the boundary conditions
in \eqref{g3}, yield \bean\label{g61} \left\{
 \begin{array}{ll}
    \textstyle\int_{-1}^{0} (q_{1}-\la \rho_{1})(x)u(x,\la)u(x,\la')+\sigma_{1}(x) u_{x}(x,\la) u_{x}(x,\la')dx
     = \sigma_{1}(0)u(0,\la')u_{x}(0^-,\la), \\
     \\
   \textstyle \int_{-1}^{0} (q_{1}-\la' \rho_{1})(x)u(x,\la')u(x,\la)+\sigma_{1}(x) u_{x}(x,\la') u_{x}(x,\la)dx
    =\sigma_{1}(0) u(0,\la)u_{x}(0^-,\la'),
\end{array}
 \right.
\eean and \bean\label{g71} \left\{
 \begin{array}{ll}
    \textstyle\int_{0}^{1} (q_{2}-\la \rho_{2})(x)v(x,\la)v(x,\la')+\sigma_{2}(x) v_{x}(x,\la) v_{x}(x,\la')dx
     = -\sigma_{2}(0)v(0,\la')v_{x}(0^+,\la), \\
     \\
    \textstyle\int_{0}^{1} (q_{2}-\la' \rho_{2})(x)v(x,\la')v(x,\la)+\sigma_{2}(x) v_{x}(x,\la') v_{x}(x,\la)dx
    = -\sigma_{2}(0) v(0,\la)v_{x}(0^+,\la').
\end{array}
 \right.
\eean Subtracting the two equations of Systems \eqref{g61} and
\eqref{g71}, we obtain \bea \left\{
 \begin{array}{ll}
    (\la'-\la)\displaystyle\int_{-1}^{0} \rho_{1}(x)u(x,\la)u(x,\la')dx
     = \sigma_{1}(0) \left(u(0,\la')u_{x}(0^-,\la)-u(0,\la)u_{x}(0^-,\la')\right), \\
     (\la'-\la)\displaystyle\int_{0}^{1} \rho_{2}(x)v(x,\la)v(x,\la')dx
     = \sigma_{2}(0)\left(v(0,\la)v_{x}(0^+,\la')-v(0,\la')v_{x}(0^+,\la)\right).
\end{array}
 \right.
\eea Hence \bean (\la-\la')\displaystyle\int_{-1}^{0}
\rho_{2}(x)u(x,\la)u(x,\la')&dx&
     =\sigma_{1}(0) u(0,\la)\left(u_{x}(0^-,\la')-u_{x}(0^-,\la)\right)\nonumber\\
     &&-\sigma_{1}(0)u_{x}(0^-,\la)\left(u(0,\la')-u(0,\la)\right)\label{f1}
\eean and \bean (\la'-\la)\displaystyle\int_{0}^{1}
\rho_{2}(x)v(x,\la)v(x,\la')&dx&
= \sigma_{2}(0) v(0,\la)\left(v_{x}(0^+,\la')-v_{x}(0^+,\la)\right)\nonumber\\
     &&-\sigma_{2}(0)v_{x}(0^+,\la)\left(v(0,\la')-v(0,\la)\right).\label{f2}
\eean Passing to the limit as $\la' \rightarrow \la$ in \eqref{f1} and \eqref{f2}, we get the
identities \bean\label{g16} \left\{
 \begin{array}{ll}
-\displaystyle\int_{-1}^{0} \rho_{1}(x)u^{2}(x,\la)dx =\sigma_{1}(0)
 \left(u(0,\la)\frac{\partial u_{x}(0^-,\la)}{\partial\la}-u_{x}(0^-,\la)\frac{\partial u(0,\la)}{\partial\la}\right),\\
-\displaystyle\int_{0}^{1} \rho_{2}(x)v^{2}(x,\la)dx= -\sigma_{2}(0)
\left(v(0,\la)\frac{\partial v_{x}(0^+,\la)}{\partial
\la}-v_{x}(0^+,\la)\frac {\partial v(0,\la)}{\partial \la}\right).
\end{array}
 \right.
\eean Dividing the first equation in \eqref{g16} by $u^2(0,\la)$ and
the second by $v^2(0,\la)$, it follows \be\dfrac{\partial
F(\la)}{\partial\la}=-\dfrac{v^2(0,\la)\int_{-1}^{0}
\rho_{1}(x)u^{2}(x,\la)dx+ u^2(0,\la)\int_{0}^{1}
\rho_{2}(x)v^{2}(x,\la)dx}{u^2(0,\la)v^2(0,\la)}<0.\label{g17}\ee In
order to prove the second statement, we firstly establish the
asymptotic of $F(\la)$ as $\la\rightarrow-\infty$. It is known
(e.g., \cite[Chapter 2]{M} and \cite[Chapter 1]{BI}), for
$\la\in\mathbb{C}$ and $|\la|\rightarrow \infty$ that
\bean\label{g21}
%\begin{cases}
%u(x,\la)=
%\left(\rho_{1}(x)\sigma_{1}(x)\right)^{-\frac{1}{4}}\frac{\sin\left(\sqrt{\la\gamma_{1}}\left(1+\frac{1}{\gamma_{1}}\int_{0}^{x}
%\sqrt{\frac{\rho_{1}(t)}{\sigma_{1}(t)}}dt\right)\right)}{\sqrt{\gamma_{1}|\la|}}[1],\\
% u_{x}(x,\la)=
%\frac{\left(\rho_{1}(x)\sigma_{1}(x)\right)^{\frac{1}{4}}}{\gamma_{1}\sigma_{1}(x)}\cos\left(\sqrt{\la\gamma_{1}}\left(1+\frac{1}{\gamma_{1}}\int_{0}^{x}
%\sqrt{\frac{\rho_{1}(t)}{\sigma_{1}(t)}}dt\right)\right)[1],
%\end{cases}
\begin{cases}
u(x,\la)=
\left(\rho_{1}(x)\sigma_{1}(x)\right)^{-\frac{1}{4}}\dfrac{\sin\left(\sqrt{\la}\int_{-1}^{x}
\sqrt{\frac{\rho_{1}(t)}{\sigma_{1}(t)}}dt\right)}{\gamma_{1}\sqrt{|\la|}}[1],\\
 u_{x}(x,\la)=
\dfrac{\left(\rho_{1}(x)\right)^{\frac{1}{4}}\left(\sigma_{1}(x)\right)^{-\frac{3}{4}}}{\gamma_{1}}\cos\left(\sqrt{\la}\int_{-1}^{x}
\sqrt{\frac{\rho_{1}(t)}{\sigma_{1}(t)}}dt\right)[1],
\end{cases}
\eean and \bean\label{g22}
\begin{cases}
v(x,\la)=
\left(\rho_{2}(x)\sigma_{2}(x)\right)^{-\frac{1}{4}}\dfrac{\sin\left(\sqrt{\la}\int_{x}^{1}
\sqrt{\frac{\rho_{2}(t)}{\sigma_{2}(t)}}dt\right)}{\gamma_{2}\sqrt{|\la|}}[1],\\
v_{x}(x,\la)=
-\dfrac{\left(\rho_{2}(x)\right)^{\frac{1}{4}}\left(\sigma_{2}(x)\right)^{-\frac{3}{4}}}{\gamma_{2}}\cos
\left(\sqrt{\la}\int_{x}^{1}
\sqrt{\frac{\rho_{2}(t)}{\sigma_{2}(t)}}dt\right)[1],
\end{cases}
\eean where $[1]=1+\mathcal{O}(\frac{1}{\sqrt{|\la|}})$, \bean
\gamma_{1}=\int_{-1}^{0}\sqrt{\frac{\rho_{1}(x)}{\sigma_{1}(x)}}dx,~~and~~~
\gamma_{2}=\int_{0}^{1}\sqrt{\frac{\rho_{2}(x)}{\sigma_{2}(x)}}dx.
\label{g36}\eean By use of \eqref{g21} and \eqref{g22}, a
straightforward calculation gives the following asymptotic \be
\label{g15}F(\la)\sim\sqrt{|\la|}\big(\sqrt{\rho_{1}(0)\sigma_{1}(0)}+
\sqrt{\rho_{2}(0)\sigma_{2}(0)}\big),~as~\la\rightarrow-\infty. \ee
This implies that $\lim\limits_{\lambda\rightarrow
-\infty}F(\lambda)=+\infty$. Now, we prove that \be
\lim_{\la\rightarrow\mu_{n}+0}F(\la)=+\infty,~~~~
\lim_{\la\rightarrow\mu_{n}-0}F(\la)=-\infty.\label{g18}\ee If
$u(0,\mu_{n})=v(0,\mu_{n})$=0, then
$u(0,\mu_{n})v_{x}(0,\mu_{n})-u_{x}(0,\mu_{n})v(0,\mu_{n})=0$. We
put $\la=\mu_{n}+\epsilon$, where $\epsilon$ is small enough.
Therefore a simple calculation yields
  \bean F(\lambda)=&\dfrac{\sigma_{1}(0)u_{x}(0,\mu_{n})\frac{\partial v}{\partial\la}(0,\mu_{n})
-\sigma_{2}(0)v_{x}(0,\mu_{n})\frac{\partial
u}{\partial\la}(0,\mu_{n})}{\epsilon \frac{\partial
u}{\partial\la}(0,\mu_{n})\frac{\partial v}
{\partial\la}(0,\mu_{n})}+ \mathcal{O}(1)\nonumber \\
=&\frac{1}{\epsilon}\left(\dfrac{\sigma_{1}(0)u_{x}(0,\mu_{n})}{\frac{\partial
u}{\partial\la}(0,\mu_{n})}- \dfrac{\sigma_{2}(0) v_{x}(0,\mu_{n})}{
\frac{\partial v}{\partial\la}(0,\mu_{n})}\right)+\mathcal{O}(1).\label{g13}
\eean Since $\mu_{n}$ is a simple eigenvalue of the two problems
\eqref{g1} and \eqref{g2}, then $\frac{\partial
u}{\partial\la}(0,\mu_{n}) \neq 0$ and
  $\frac{\partial v}{\partial\la}(0,\mu_{n})\neq0$.
Let denote by
$$F_{1}(\lambda)=\frac{\sigma_{1}(0)u_{x}(0,\la)}{u(0,\la)} ~~and~~
F_{2}(\lambda)=\frac{\sigma_{2}(0)v_{x}(0,\la)}{v(0,\la)}.$$ It is
easily seen that the eigenvalues $\eta_{j}$ and $\eta'_{k}$
$(j\geq1, ~~k\geq1)$ are the poles of $F_{1}(\lambda)$ and
$F_{2}(\lambda)$, respectively. In \cite[Proposition 4]{JA}, by use of
Mittag-Lefleur theorem \cite[Chapter 4]{7}, $F_{1}(\la)$ has the following decomposition
$$F_{1}(\la)=\displaystyle\sum_{j\geq1}(\dfrac{\la}{\eta_{j}})\dfrac{c_{j}}{\la-\eta_{j}},$$
where $c_{j}$ are the residuals
of $F_{1}(\la)$ at the poles $\eta_{j}$, $j\geq1$. It is known that
the residuals of $F_{1}(\lambda)$ and $F_{2}(\lambda)$ (at the poles
$\eta_{j}$ and $\eta'_{k}$, respectively)
 are given by
$$c_{j}=\frac{\sigma_{1}(0)u_{x}(0,\eta_{j})}{\frac{\partial u }{\partial\la}(0,\eta_{j})},~~c'_{k}= \frac{\sigma_{2}
(0)v_{x}(0,\eta'_{k})}{\frac{\partial
v}{\partial\la}(0,\eta'_{k})}.$$ According to the proof of Proposition $4$ in
 \cite{JA}, we have $c_{j}>0$, $j\geq1$. By a change of
variable $s=-x$, if $v(x,\la)$ is a solution of Problem \eqref{g7},
then $v(-s, \la) $ is a solution of Problem \eqref{g6}, and hence,
$F_{2}(\la)=-\frac{\sigma_{2}(0)v_{s}(0,\la)}{v(0,\la)}$. Therefore
$c'_{k}<0$ for all $k\geq1$ and
$$\dfrac{\sigma_{1}(0)u_{x}(0,\mu_{n})}{ \frac{\partial
u}{\partial\la}(0,\mu_{n})}- \dfrac{\sigma_{2}(0) v_{x}(0,\mu_{n})}{
\frac{\partial v}{\partial\la}(0,\mu_{n})}>0,~n\geq1.$$ Passing to
the limit as $\epsilon\rightarrow 0$ in \eqref{g13}, then we prove
the first limit in \eqref{g18}. Analogously, we prove the second limit of \eqref{g18}.\\
Now, if $u(0,\mu_{n})=0$ and $v(0,\mu_{n})\neq0$ (or
$v(0,\mu_{n})=0$ and $u(0,\mu_{n})\neq0$), then
$\sigma_{1}(0)u_{x}(0,\mu_{n})\neq0$ (or
$\sigma_{2}(0)v_{x}(0,\mu_{n})\neq0$), and hence, we arrive to the
same conclusion.
\end{Proof}\\
From this lemma, it is clear that the poles of $F(\lambda)$ coincide
with the eigenvalues $(\mu_{n})_{n\geq 1}$. As a consequence of
Lemma \ref{Lg1}, it follows the following interpolation formulas
between the eigenvalues $\la_{n}$, $\mu_{n}$ and those of the
regular problem \eqref{M} for $M=0$.
\begin{Cor}\label{cg1}
 Let $\la'_{n}$, $n\geq1$ denote the eigenvalues of the regular problem \eqref{M} for $M=0$. If
 $\mu_{n}\neq\mu_{n+1}$, then
 \be
 \la_{1}<\la'_{1}<\mu_{1}~and~\mu_{n}<\la_{n+1}<\la'_{n+1}<\mu_{n+1},~n\geq1
 \label{g35}
\ee
and if $\mu_{n}=\mu_{n+1}$, then $\mu_{n}=\lambda_{n+1}=\lambda'_{n+1}$.
\end{Cor}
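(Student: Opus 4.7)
The plan is to read the corollary as a graphical comparison between the curve $y=F(\lambda)$ and the two lines $y=0$ and $y=M\lambda$, using Lemma \ref{Lg1} as the only serious ingredient. By \eqref{g14} the eigenvalues $\lambda_{n}$ of Problem \eqref{M} are exactly the abscissas of the intersections of the graph of $F$ with the line $y=M\lambda$, while the eigenvalues $\lambda'_{n}$ of the regular problem (Problem \eqref{M} with $M=0$) are the zeros of $F$. Lemma \ref{Lg1} tells us that on $(-\infty,\mu_{1})$ and on each $(\mu_{n},\mu_{n+1})$ with $\mu_{n}\neq\mu_{n+1}$ the function $F$ is continuous and strictly decreases from $+\infty$ to $-\infty$; hence each of these intervals contains exactly one $\lambda'_{k}$ and exactly one $\lambda_{\ell}$.

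First I would fix the indexing. Counting the zeros of $F$ from the left and invoking Lemma \ref{sss} (which ensures $\lambda_{n},\lambda'_{n}>0$ for all $n$), I obtain $\lambda'_{1}\in (0,\mu_{1})$ and, for $n\geq 1$, $\lambda'_{n+1}\in(\mu_{n},\mu_{n+1})$; the analogous count for $F(\lambda)=M\lambda$ gives $\lambda_{1}\in(0,\mu_{1})$ and $\lambda_{n+1}\in(\mu_{n},\mu_{n+1})$. Second, I compare $\lambda_{n+1}$ with $\lambda'_{n+1}$ on each such interval: since $M>0$ and both eigenvalues are positive,
\[
F(\lambda_{n+1})=M\lambda_{n+1}>0=F(\lambda'_{n+1}),
\]
and the strict monotonicity of $F$ on that interval forces $\lambda_{n+1}<\lambda'_{n+1}$. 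The same argument on $(-\infty,\mu_{1})$, together with $\lambda_{1},\lambda'_{1}>0$, yields $\lambda_{1}<\lambda'_{1}<\mu_{1}$, which is the first line of \eqref{g35}.

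Third, I would treat the degenerate case $\mu_{n}=\mu_{n+1}$, i.e.\ $\mu_{n}\in\Gamma^{*}$, where $u(0,\mu_{n})=v(0,\mu_{n})=0$. Here $F$ is not well-defined at $\mu_{n}$, so the graphical argument does not apply directly. Instead I would construct an eigenfunction explicitly: set
\[
\phi(x)=
\begin{cases}
c_{1}u(x,\mu_{n}), & -1\leq x\leq 0,\\
c_{2}v(x,\mu_{n}), & 0\leq x\leq 1,
\end{cases}
\]
with $\phi(-1)=\phi(1)=0$ and $\phi(0)=0$ automatic. The transmission condition at $x=0$ in \eqref{M} reduces to $c_{1}\sigma_{1}(0)u_{x}(0^{-},\mu_{n})=c_{2}\sigma_{2}(0)v_{x}(0^{+},\mu_{n})$ because the right-hand side $\lambda M\phi(0)$ vanishes; since $u_{x}(0^{-},\mu_{n})\neq 0$ and $v_{x}(0^{+},\mu_{n})\neq 0$ (as in Lemma \ref{sss}), this determines the ratio $c_{2}/c_{1}$ uniquely and shows that $\mu_{n}$ is a common eigenvalue of Problem \eqref{M} and of the $M=0$ problem. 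With the indexing convention adopted after \eqref{g34} ($\mu_{n}=\mu_{n+1}$), this shared value sits precisely at the slot $\lambda_{n+1}=\lambda'_{n+1}=\mu_{n}$, giving the second assertion.

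The main obstacle is not any single estimate but the bookkeeping: one must verify that the indices produced by counting monotone branches of $F$ match the indices in the statement, and that the degenerate case integrates seamlessly with the convention introduced just below \eqref{g34}. A minor technical point is to ensure that $\lambda'_{n}>0$ for the $M=0$ problem so that the sign comparison $F(\lambda_{n})>F(\lambda'_{n})$ really lies in the regime where $M\lambda>0$; this follows from the same integration by parts that proved positivity of $\mathcal{A}$ in Proposition \ref{rr}, applied with $M=0$.
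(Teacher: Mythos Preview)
Your proposal is correct and follows the same approach as the paper: both read the interlacing directly off Lemma~\ref{Lg1} by comparing the monotone branches of $F$ with the lines $y=0$ and $y=M\lambda$. You are in fact more explicit than the paper on two points---the sign comparison $F(\lambda_{n+1})=M\lambda_{n+1}>0=F(\lambda'_{n+1})$ that forces $\lambda_{n+1}<\lambda'_{n+1}$, and the degenerate case $\mu_{n}=\mu_{n+1}$, which the paper's proof leaves to the figure.
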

\begin{Proof} According to Lemma \ref{Lg1}, $F(\la)$ is a decreasing function from $+\infty$ to $-\infty$ along each of the intervals
 $(-\infty, \mu_{1})$ and $(\mu_{n}, \mu_{n+1})$, $n\geq1$. Therefore Equation \eqref{g14} has exactly
 one zero in each of these intervals. Moreover, the equation $F(\la)=0$  has exactly one zero in each of intervals
 $(-\infty, \mu_{1})$ and $(\mu_{n}, \mu_{n+1})$, $n\geq1$. It is clear that these zeros (denoted by $\la'_{n}$)
 are the eigenvalues of the regular problem \eqref{M} for $M=0$. Consequently, the interpolation properties
 are simple deductions from the curves of the functions $F(\la)$ and $M\la$ (see. Figure 1).\\
 \begin{figure}[H]
 \centering
\begin{overpic}[width=0.4\textwidth]{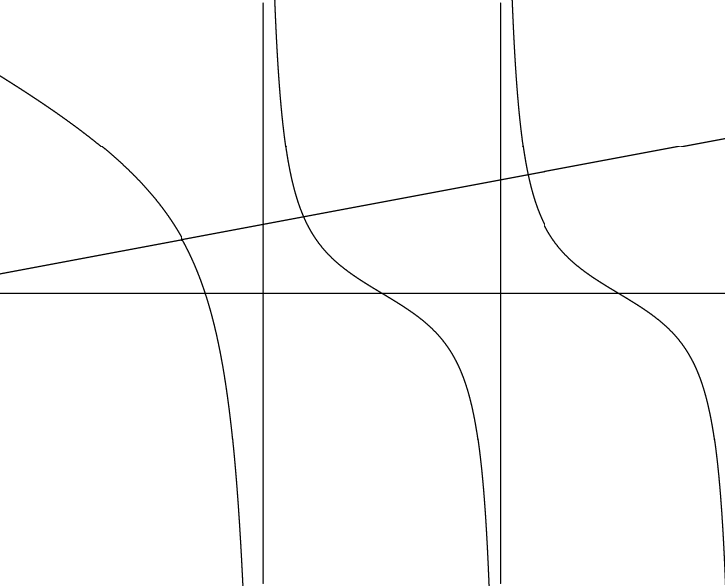}
\put(47,69.5){\bf{.}} \put(61,69.5){\bf{.}} \put(89,69.5){\bf{.}}
\put(117,69.5){\bf{.}} \put(146,69.5){\bf{.}}
\put(49,63){$\scriptscriptstyle{\la'_{1}}$}
\put(61,64){$\scriptscriptstyle{{\mu_{1}}}$}
\put(90,63){$\scriptscriptstyle{\la'_{2}}$}
\put(114,64){$\scriptscriptstyle{{\mu_{2}}}$}
\put(146,63){$\scriptscriptstyle{\la'_{3}}$} \put(41.5,82){\bf{.}}
\put(71,88){\bf{.}} \put(124,98){\bf{.}} \put(41.5,69.5){\bf{.}}
\put(41,72){\rotatebox{90}{-~-}}
\put(40,64){$\scriptscriptstyle{\la_{1}}$}
\put(70.5,65){\rotatebox{90}{~-~-~-~}}
\put(70.3,64){$\scriptscriptstyle{\la_{2}}$}
\put(123.7,65){\rotatebox{90}{~-~-~-~-}}
\put(123.8,64){$\scriptscriptstyle{\la_{3}}$}
%\put(71,84.5){ $\star$}
%\put(30,63){ $\scriptscriptstyle{\la'_{o}}$}
%\put(90,63){ $\scriptscriptstyle{{\mu_{2}}}$}
%\put(170,63){ $\scriptscriptstyle{{\mu_{1}}}$}
%\put(111,89.5){ $\star$}
%\put(155,99){ \small{$\la'_{n+1}$}}
%\put(152,95){ $\star$}
%\put(15,72){ \small{$0$}}
%\put(55,75){ \small{$\mu_{2}$}}
%\put(95,75){ \small{$\mu_{3}...$}}
%\put(140,75){ \small{$\mu_{n+1}$}}
\end{overpic}
\caption{Interpolation properties}
\end{figure}
\end{Proof}\\
We are ready now to prove Theorem \ref{T1}.\\
\begin{proof} {\bf  of Theorem\ref{T1}.}
We assume that $\mu_{n}\neq \mu_{n+1}$, for $n\geq1$.
First, we establish the asymptotic estimate \be
|\la_{n+1}-\mu_{n}|=\mathcal{O}(1). \label{g19} \ee
For $\la \in (\mu_{n}, \la_{n+1}]$, we put
$G(\la)=\dfrac{1}{F(\la)}$. In view of Corollary
(\ref{cg1}) $(\mu_{n}, \la_{n+1}]\subset(\mu_{n},\la'_{n+1})$. Hence,
$\sigma_{1}(0)v(0,\la)u_{x}(0^-,\la)-\sigma_{2}(0)u(0,\la)v_{x}(0^+,\la)\neq0$
for $\la \in (\mu_{n}, \la_{n+1}]$ and this implies that $G(\la)$ is
well-defined in this interval. By use of the mean value theorem on
the interval
 $[\la_{n}^{\epsilon}, \la_{n+1}]$, (where $\la_{n}^{\epsilon}=\mu_{n}+\epsilon$ for enough small
 $\epsilon>0$),
we have
$$G(\la_{n+1})-G(\la_{n}^{\epsilon})=(\la_{n+1}-\la_{n}^{\epsilon})\frac{\partial G(\la)}{\partial \la}\mid_{\la=\alpha_{n}},$$
for some $ \alpha_{n}\in (\la_{n}^{\epsilon}, \la_{n+1})$. Using
Equation \eqref{g14} and the expression \eqref{g17} of
$\frac{\partial F(\la)}{\partial \la}$, we obtain
\bean\dfrac{1}{M\la_{n+1}}-G(\la_{n}^{\epsilon})=(\la_{n+1}-\la_{n}^{\epsilon})\frac{\partial
G}{\partial\la}(\alpha_{n}),\label{g20}\eean where
$$\frac{\partial G}{\partial\la}(\alpha_{n})=\dfrac{v^2(0,\alpha_{n})\int_{-1}^{0} \rho_{1}(s)u^{2}(s,\alpha_{n})ds+
u^2(0,\alpha_{n})\int_{0}^{1}
\rho_{2}(s)v^{2}(s,\alpha_{n})ds}{\Big(\sigma_{1}(0)v(0,\alpha_{n})u_{x}(0,\alpha_{n})-
\sigma_{2}(0)u(0,\alpha_{n})v_{x}(0,\alpha_{n})\Big)^{2}}.$$ Taking
into account the asymptotes \eqref{g21} and \eqref{g22}, a simple
calculation gives \be\label{g31} \la_{n+1}-\la_{n}^{\epsilon}=C
\alpha_{n}\left(\frac{1}{M\la_{n+1}}-G(\la_{n}^{\epsilon})\right),
\ee where $C$ is a positive constant. It is known (e.g., \cite{FVAA} and \cite[Chapter 6.7]{IGK}), that the eigenvalues $\la'_{n}$ (of Problem \eqref{M} for
$M=0$) satisfy the asymptotes
\be\label{g23}\la'_{n}=\left(\frac{n\pi}{\int_{-1}^{1}
\sqrt{\frac{\rho(x)}{\sigma(x)}}dx}\right)^{2}+\mathcal{O}(1),\ee where \bea
\rho(x)=\left\{
 \begin{array}{ll}
   \rho_{1}(x),~~x\in[-1,0], \\
   \rho_{2}(x),~~x\in[0,1],
\end{array}
 \right.
~~and~~ \sigma(x)=\left\{
 \begin{array}{ll}
   \sigma_{1}(x),~~x\in[-1,0], \\
   \sigma_{2}(x),~~x\in[0,1],
\end{array}
 \right.
. \eea Since $\la'_{n}<\alpha_{n}<\la_{n+1}<\la'_{n+1}$, then by
\eqref{g23},
\bean\alpha_{n}\sim\left(\dfrac{n\pi}{\gamma_{1}+\gamma_{2}}\right)^{2}\label{g24}
\eean and \bean
\la_{n}\sim\left(\dfrac{n\pi}{\gamma_{1}+\gamma_{2}}\right)^{2},~~as~~n\rightarrow\infty,\label{g30}\eean
where $\gamma_{1}$ and $\gamma_{2}$ are defined by \eqref{g36}. From
\eqref{g18}, we have $\la_{n}^{\epsilon}\rightarrow\mu_{n}$ and
$G(\la_{n}^{\epsilon})\rightarrow 0 $ as $\epsilon\rightarrow0$.
Then by \eqref{g31}, \eqref{g24} and \eqref{g30}, the desired estimate
\eqref{g19} follows. Since
$(\la_{n+1}-\la_{n})=(\la_{n+1}-\mu_{n})+(\mu_{n}-\la_{n})$, then by
\eqref{g19} the estimate \eqref{g26} is proved.\\ Now if $\mu_n=\mu_{n+1}$,
then by Corollary \ref{cg1}, $\mu_n=\la_{n+1}$. It is known \cite[Chapter 1]{BI}, that the
eigenvalues $\eta_{n}$ and $\eta'_{n}$ (of the boundary problems \eqref{g1} and \eqref{g2},
respectively) satisfy the asymptotics
\bean \left\{
 \begin{array}{ll}
   \eta_{n}=\(\frac{n\pi}{\gamma_{1}}\)^2+\mathcal{O}(1), \\
   \eta'_{n}=\(\frac{n\pi}{\gamma_{2}}\)^2+\mathcal{O}(1).
\end{array}
 \right.\label{g37}
\eean
Since $\mu_n$ is an eigenvalue of the both
problems \eqref{g1} and \eqref{g2}, then from \eqref{g37}, we have $\mu_n-\mu_{n-1}=\mathcal{O}(n)$.
In view of Corollary \ref{cg1}, $\la_n\in (\mu_{n-1},\mu_n)$, then from the asymptote \eqref{g19}
 we get
$$(\la_{n+1}-\la_{n})=\mathcal{O}(n).$$
The theorem is proved.
\end{proof}\\
We establish now the asymptotic behavior of the eigenfunctions
$(\phi_{n}(x))_{n\geq1}$ of the eigenvalue problem \eqref{M}. Let
as denote by $\phi_{n}^{u}(x)$ and $\phi_{n}^{v}(x)$ the restrictions
of the eigenfunctions $\phi_{n}(x)$ to $[-1,0]$ and $[0,1]$,
respectively.
\begin{Prop}\label{prop1}
Define the set
$$\Lambda=\left\{n\in\mathbb{N^{*}}~such~that~\mu_{n}\in\Gamma^{*}\right\},$$
where the set $\Gamma^{*}$ is defined by \eqref{g34}. Then the
associated eigenfunctions $(\phi_{n}(x))_{n\geq1}$
of the eigenvalue problem \eqref{M} satisfy the following asymptotic estimates:\\
\begin{description}
  \item[i$)$] For $n\in \Lambda$,
\bean\label{pro1} \left\{
\begin{array}{ll}
\phi_{n}^{u}(x)=-\left(\dfrac{\rho_{1}(x)\sigma_{1}(x)}{\rho_{2}(0)\sigma_{2}(0)}\right)^{-\frac{1}{4}}\dfrac{\cos(\sqrt{\lambda_{n}}\gamma_{2})\sin\left(\sqrt{\lambda_{n}}\int_{-1}^{x}
\sqrt{\frac{\rho_{1}(t)}{\sigma_{1}(t)}}dt\right)}{\sqrt{\lambda_{n}}\gamma_{1}\gamma_{2}}+\mathcal{O}(\frac{1}{n^{2}}),\\
\phi_{n}^{v}(x)=\left(\dfrac{\rho_{2}(x)\sigma_{2}(x)}{\rho_{1}(0)\sigma_{1}(0)}\right)^{-\frac{1}{4}}\dfrac{\cos(\sqrt{\lambda_{n}}\gamma_{1})\sin\left(\sqrt{\lambda_{n}}\int_{x}^{1}
\sqrt{\frac{\rho_{2}(t)}{\sigma_{2}(t)}}dt\right)}{\sqrt{\lambda_{n}}\gamma_{1}\gamma_{2}}+\mathcal{O}(\frac{1}{n^{2}}),
\end{array}
 \right.
\eean
where \bea
\gamma_{1}=\int_{-1}^{0}\sqrt{\frac{\rho_{1}(x)}{\sigma_{1}(x)}}dx ~~and~~~
\gamma_{2}=\int_{0}^{1}\sqrt{\frac{\rho_{2}(x)}{\sigma_{2}(x)}}dx.
\eea
  \item[ii$)$] For $n \in \mathbb{N}^{*}\backslash\Lambda$,
%   \bean\label{pro2} \left\{
%\begin{array}{ll}
%\phi_{n}^{u}(x)=\sqrt{\frac{\gamma_{1}}{\gamma_{2}}}\left(\frac{\rho_{2}(0)\sigma_{2}(0)(\rho_{1}(x)\sigma_{1}(x))}
%{\rho_{1}(0)\sigma_{1}(0)}\right)^{-\frac{1}{4}}
%\frac{\sin\left(\frac{n\pi\sqrt{\gamma_{2}}}{\gamma_{1}+\gamma_{2}}\right)\sin\left(\frac{n\pi\sqrt{\gamma_{1}}}{\gamma_{1}+\gamma_{2}}\left(1+\frac{1}{\gamma_{1}}\int_{0}^{x}
%\sqrt{\frac{\rho_{1}(t)}{\sigma_{1}(t)}}dt\right)\right)}{\sin\left(\frac{n\pi\sqrt{\gamma_{1}}}{\gamma_{1}+\gamma_{2}}\right)[1]}[1],\\
%~~~~\\
% \phi_{n}^{v}(x)=
%(\gamma_{1}+\gamma_{2})(\rho_{2}(x)\sigma_{2}(x))^{-\frac{1}{4}}\dfrac{\sin\left(\frac{n\pi\sqrt{\gamma_{2}}}{\gamma_{1}+\gamma_{2}}\left(1-\frac{1}{\gamma_{2}}\int_{0}^{x}
%\sqrt{\frac{\rho_{2}(t)}{\sigma_{2}(t)}}dt\right)\right)}{n\pi\sqrt{\gamma_{2}}}[1].
%\end{array}
% \right.
%\eean
\bean\label{pro2} \left\{
\begin{array}{ll}
\phi_{n}^{u}(x)=\dfrac{\left(\rho_{1}(x)\sigma_{1}(x)\right)^{-\frac{1}{4}}}{\left(\rho_{2}(0)\sigma_{2}(0)\right)^{\frac{1}{4}}}
\dfrac{\sin(\sqrt{\lambda_{n}}\gamma_{2})\sin\left(\sqrt{\lambda_{n}}\int_{-1}^{x}
\sqrt{\frac{\rho_{1}(t)}{\sigma_{1}(t)}}dt\right)}{\sqrt{\lambda_{n}}\gamma_{1}\gamma_{2}}+\mathcal{O}(\frac{1}{n^{2}}),\\
\phi_{n}^{v}(x)=\dfrac{\left(\rho_{2}(x)\sigma_{2}(x)\right)^{-\frac{1}{4}}}{\left(\rho_{1}(0)\sigma_{1}(0)\right)^{\frac{1}{4}}}\dfrac{\sin(\sqrt{\lambda_{n}}\gamma_{1})
\sin\left(\sqrt{\lambda_{n}} \int_{x}^{1}
\sqrt{\frac{\rho_{2}(t)}{\sigma_{2}(t)}}dt\right)}{\sqrt{\lambda_{n}}\gamma_{1}\gamma_{2}}+\mathcal{O}(\frac{1}{n^{2}}).
\end{array}
 \right.
\eean
%where
%\bean\sqrt{\la_{n}}\sim\dfrac{n\pi}{\gamma_{1}+\gamma_{2}},~as~~ n\longrightarrow\infty.\label{g41}\eean
\end{description}%Moreover, the derivatives of the eigenfunctions
%$(\phi_{n}^{v})_{x}(1)$ are given as
%follows \bean\label{pro3}\left\{\begin{array}{ll}
%(\phi_{n}^{v})_{x}(1)=\frac{-(\rho_{2}(1))^{\frac{1}{4}}(\sigma_{2}(1))^{-\frac{3}{4}}}{\gamma_{2}}+O(\frac{1}{n}),~~n\in \Lambda,\\
%(\phi_{n}^{v})_{x}(1)=\frac{-(\rho_{1}(0)\sigma_{1}(0))^{-\frac{1}{4}}(\rho_{2}(1))^{\frac{1}{4}}(\sigma_{2}(1))^{-\frac{3}{4}}
%\sin\left(\frac{n\pi\gamma_{1}}{\gamma_{1}+\gamma_{2}}\right)}{\gamma_{1}\gamma_{2}}
%+O(\frac{1}{n}),~~n \in \mathbb{N}^{*}\backslash\Lambda.
%\end{array}
% \right.
%\eean
\end{Prop}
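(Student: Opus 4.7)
The plan is to substitute the uniform asymptotics \eqref{g21}--\eqref{g22} for the fundamental solutions $u(\cdot,\la),v(\cdot,\la)$ and their derivatives into the piecewise expression of $\phi_n$ established in the proof of Lemma~\ref{sss}, and then choose the free constants so that the leading terms take the stated form. Any eigenfunction has the representation $\phi_n^u(x)=c_1\,u(x,\la_n)$ on $[-1,0]$ and $\phi_n^v(x)=c_2\,v(x,\la_n)$ on $[0,1]$, subject to the continuity $c_1 u(0,\la_n)=c_2 v(0,\la_n)$ at the mass point and to the jump condition $c_1\sigma_1(0)u_x(0,\la_n)-c_2\sigma_2(0)v_x(0,\la_n)=\la_n M c_1 u(0,\la_n)$. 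The two items of the proposition correspond precisely to whether $u(0,\la_n),v(0,\la_n)$ are nonzero ($n\in\mathbb{N}^*\setminus\Lambda$) or both vanish ($n\in\Lambda$).

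For $n\in\mathbb{N}^*\setminus\Lambda$, I would choose $c_1=\sqrt{\la_n}\,v(0,\la_n)$ and $c_2=\sqrt{\la_n}\,u(0,\la_n)$: the continuity is then automatic and the jump equation collapses into $F(\la_n)=M\la_n$ of \eqref{g14}. Plugging \eqref{g21}--\eqref{g22} into $\sqrt{\la_n}\,v(0,\la_n)\,u(x,\la_n)$, the two $1/(\ga_i\sqrt{\la_n})$ factors combined with the explicit $\sqrt{\la_n}$ give the prefactor $1/(\sqrt{\la_n}\ga_1\ga_2)$, the two sines give $\sin(\sqrt{\la_n}\ga_2)\sin\left(\sqrt{\la_n}\int_{-1}^x\sqrt{\rho_1/\sigma_1}\right)$, and the algebraic factors reproduce $(\rho_1(x)\sigma_1(x))^{-1/4}/(\rho_2(0)\sigma_2(0))^{1/4}$; this is exactly the leading term of \eqref{pro2}. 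The symmetric computation on $c_2 v(x,\la_n)$ yields the formula for $\phi_n^v$.

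For $n\in\Lambda$, the definition of $\Ga^*$ forces $u(0,\la_n)=v(0,\la_n)=0$, so the continuity is automatic and the jump condition degenerates into $c_1\sigma_1(0)u_x(0,\la_n)=c_2\sigma_2(0)v_x(0,\la_n)$. Setting $c_1=\sigma_2(0)v_x(0,\la_n)$ and $c_2=\sigma_1(0)u_x(0,\la_n)$ and inserting the asymptotics of $u_x,v_x$ from \eqref{g21}--\eqref{g22} replaces the sine prefactors of case ii) by cosine ones; the overall minus sign in front of $\phi_n^u$ in \eqref{pro1} comes from the minus sign in the asymptotic of $v_x$ in \eqref{g22}. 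Moreover, the asymptotes \eqref{g37} applied to $\mu_n=\eta_j=\eta'_k$ give $\sqrt{\la_n}\ga_i\in\pi\Z+\mathcal{O}(1/\sqrt{\la_n})$, whence $\sin(\sqrt{\la_n}\ga_i)=\mathcal{O}(1/\sqrt{\la_n})$ and $\cos(\sqrt{\la_n}\ga_i)=\pm 1+\mathcal{O}(1/\la_n)$, which confirms that the surviving $\mathcal{O}(1)$ quantities at the mass point are indeed the cosines displayed in \eqref{pro1}.

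The main obstacle is the uniform-in-$n$ control of the residual terms so as to achieve the announced $\mathcal{O}(1/n^2)$ bound. Since the leading contribution is already of size $1/\sqrt{\la_n}=\mathcal{O}(1/n)$, the factors $[1]=1+\mathcal{O}(1/\sqrt{|\la_n|})$ built into \eqref{g21}--\eqref{g22} produce an $\mathcal{O}(1/n)$ relative error that upgrades to an absolute $\mathcal{O}(1/n^2)$ one. This bookkeeping is routine away from $\Lambda$, but is more delicate at indices $n$ for which $u(0,\la_n)$ or $v(0,\la_n)$ is small without vanishing (i.e.\ when $\la_n$ approaches $\Ga^*$): the case-ii) choice $c_i\propto v(0,\la_n)$ or $u(0,\la_n)$ must be shown not to degenerate, and a matching between the two normalizations is required to confirm that the formulas \eqref{pro1}--\eqref{pro2} glue consistently with the same remainder across the transition.
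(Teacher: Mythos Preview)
Your approach is exactly the paper's: for $n\in\Lambda$ it takes $\phi_n=(\sigma_2(0)v_x(0,\la_n)\,u,\ \sigma_1(0)u_x(0,\la_n)\,v)$, for $n\notin\Lambda$ it takes $\phi_n=\sqrt{\la_n}\,\wt U(\cdot,\la_n)$, and then simply substitutes the asymptotics \eqref{g21}--\eqref{g22}. The ``gluing'' worry in your last paragraph is not needed and the paper ignores it entirely: the $\phi_n$ are merely convenient (unnormalized) representatives and each index $n$ lies in exactly one of the two cases, so no matching between the two normalizations is ever required.
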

\begin{Proof}
It is clear that for all $n\in\Lambda$,
$u(0,\lambda_{n})=v(0,\lambda_{n})=0$. Then, from the last condition
in \eqref{M}, the corresponding eigenfunctions
$(\phi_{n}(x))_{n\in\Lambda}$ have the form
%the set of solutions of
%System \eqref{g3} for $\la=\mu_{n}$ is a two-dimensional subspace.
%Thus, the last condition in \eqref{an} is satisfied, and in this
%case $\mu_{n}$ is an eigenvalue of Problem \eqref{an}. The
%corresponding eigenfunctions $(\phi_{n}(x))_{n\in\Lambda}$ have the
%form
\begin{equation}\label{ss}
\phi_{n}(x)= \left\{
\begin{array}{lll}
\sigma_{2}(0)v_{x}(0,\lambda_{n})u(x,\lambda_{n}),&-1\leq x\leq 0,\\
\sigma_{1}(0)u_{x}(0,\lambda_{n})v(x,\lambda_{n}),&0\leq x\leq 1.\\
\end{array}
\right.
\end{equation}
%It is clear that for
%all $n\in\Lambda$, $u(0,\mu_{n})=v(0,\mu_{n})=0$, and hence,
%the set of solutions of the System \eqref{g3} for $\la=\mu_{n}$ is a two-dimensional subspace.
%Thus, the last condition in \eqref{M} is satisfied, and in this case $\mu_{n}$ is an eigenvalue
%of Problem \eqref{M}. The corresponding eigenfunctions $(\phi_{n}(x))_{n\in\Lambda}$ have the form
%\begin{equation}\label{g40}\phi_{n}(x)= \left\{
%\begin{array}{lll}
%u(x,\mu_{n}),&-1\leq x\leq 0,\\
%v(x,\mu_{n}),&0\leq x\leq 1.\\
%\end{array}
%\right.
%\end{equation} It is known [\cite{BI}, Chapter 1] that the
%eigenvalues $\eta_{n}$ and $\eta'_{n}$ (of the boundary problems \eqref{g1} and \eqref{g2}, respectively) satisfy the asymptotics
%\bean \left\{
% \begin{array}{ll}
%   \sqrt{\eta_{n}}=\frac{n\pi}{\gamma_{1}}+O(\frac{1}{n}), \\
%   \sqrt{\eta'_{n}}=\frac{n\pi}{\gamma_{2}}+O(\frac{1}{n}).
%\end{array}
% \right.\label{g37}
%\eean
Therefore the asymptotics \eqref{pro1} are simple deductions
from the asymptotics \eqref{g21}, \eqref{g22} and \eqref{ss}.
Now, if $n\in\mathbb{N^{*}}\backslash\Lambda$, then
$\phi_{n}^{u}(0)=\phi_{n}^{v}(0)\neq 0$. We set
\begin{equation}\label{gg}
\phi_{n}(x)=\sqrt{\lambda_{n}}\widetilde{U}(x,\la_{n}),
\end{equation}
where $\widetilde{U}(x,\la_{n})$ is defined by \eqref{g4}. From the
asymptotics \eqref{g21}, \eqref{g22} and \eqref{gg} we obtain the asymptotics
\eqref{pro2}.
\end{Proof}
%%%%%%%%%%%%%%%%%%%%%%%%%%%%%%%%%%%%%%%%%%%%%%%%%%%%%%%%%%%%%%%%%%%%%%%%%%%%%%%%%%%%%%%%%%%%%%%%%%%%%%%%%%%%%%%%%%%%%%%%%%%%%%%
%%%%%%%%%%%%%%%%%%%%%%%%%%%%%%%%%%%%%%%%%%%%%%%%%%%%%%%%%%%%%%%%%%%%%%%%%%%%%%%%%%%%%%%%%%%%%%%%%%%%%%%%%%%%%%%%%%%%%%%%%%%%%%%
\section{Dirichlet Boundary Control}\label{dc}
In this section, we study the boundary null-controllability of
Problem (\ref{a})-(\ref{b}). We do it by reducing the control
problem to problem of moments. Then, we will solve this problem
of moments using the theory developed in \cite{DH, DH1}.
Namely, we enunciate the following result:
\begin{Theo}\label{DC1}
Assume that the coefficients $\sigma_{i}(x)$, $\rho_{i}(x)$ and
$q_{i}(x)$ $(i=1,2)$ satisfy \eqref{i1} and \eqref{i2}. Let $ T >
0$, then for any $Y^{0}=(u^{0},v^{0},z^{0})^t\in \mathcal{H}$ there
exists a control $h(t)\in L^{2}(0, T )$ such that the solution
$Y(t,x)=(u(t,x),v(t,x),z(t))^t$ of the control system
(\ref{a})-(\ref{b}) satisfies
 $$u(T,x)=v(T,x)=z(T)=0.$$
\end{Theo}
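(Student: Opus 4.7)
The plan is to apply the classical Fattorini--Russell moment method \cite{DH,DH1}, built on the spectral analysis produced in Sections \ref{s1} and \ref{GD1}. First, I would expand the initial datum in the orthonormal eigenbasis $(\widetilde{\phi}_{n})_{n\geq 1}$ of Lemma \ref{sss}, writing $Y^{0}=\sum_{n\geq 1}a_{n}\widetilde{\phi}_{n}$ with $a_{n}=\langle Y^{0},\widetilde{\phi}_{n}\rangle_{\mathcal{H}}$ and $y_{n}(t)=\langle Y(t),\widetilde{\phi}_{n}\rangle_{\mathcal{H}}$. Testing the first two equations of \eqref{a} against the restrictions of $\widetilde{\phi}_{n}$, integrating by parts on $(-1,0)$ and $(0,1)$, and combining with the interface equation $Mz_{t}=\sigma_{2}(0)v_{x}(0)-\sigma_{1}(0)u_{x}(0)$, all the boundary contributions at $x=0$ cancel thanks to the transmission conditions satisfied by $\widetilde{\phi}_{n}$ in \eqref{M}; the only surviving boundary term is produced by the Dirichlet control at $x=1$. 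This produces the scalar ODE $\dot{y}_{n}+\lambda_{n}y_{n}=-\sigma_{2}(1)\widetilde{\phi}_{n}^{v\,\prime}(1)\,h(t)$ with $y_{n}(0)=a_{n}$, so that the target condition $y_{n}(T)=0$ for every $n\geq 1$ is equivalent to the moment problem
\begin{equation*}
\int_{0}^{T}e^{-\lambda_{n}(T-t)}h(t)\,dt=\dfrac{a_{n}\,e^{-\lambda_{n}T}}{\sigma_{2}(1)\widetilde{\phi}_{n}^{v\,\prime}(1)},\qquad n\geq 1.
\end{equation*}

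Next, I would invoke the Fattorini--Russell construction of a biorthogonal family $(q_{n})_{n\geq 1}\subset L^{2}(0,T)$ to $(e^{-\lambda_{n}s})_{n\geq 1}$ with $\|q_{n}\|_{L^{2}(0,T)}\leq C(T,\omega)e^{\omega\sqrt{\lambda_{n}}}$ for every $\omega>0$. Its existence is guaranteed by two spectral facts already at hand: the uniform gap $\lambda_{n+1}-\lambda_{n}\geq\delta_{1}>0$ provided by Theorem \ref{T1}, together with the Weyl-type asymptote $\lambda_{n}\sim(n\pi/(\gamma_{1}+\gamma_{2}))^{2}$ of \eqref{g30}, which in particular makes $\sum_{n\geq 1}\lambda_{n}^{-1}$ finite. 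The ansatz
\begin{equation*}
h(T-s)=\sum_{n\geq 1}\dfrac{a_{n}\,e^{-\lambda_{n}T}}{\sigma_{2}(1)\widetilde{\phi}_{n}^{v\,\prime}(1)}\,q_{n}(s)
\end{equation*}
will formally solve the moment equations, reducing the whole proof to an $L^{2}$-convergence check for this series.

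The decisive and most delicate step, which I expect to be the main obstacle, is a polynomial lower bound on $|\widetilde{\phi}_{n}^{v\,\prime}(1)|$. Differentiating the asymptotic expansions of Proposition \ref{prop1} at $x=1$ and then normalizing $\widetilde{\phi}_{n}=\phi_{n}/\|\phi_{n}\|_{\mathcal H}$ yields, up to a nonzero constant $c_{0}$,
\begin{equation*}
\widetilde{\phi}_{n}^{v\,\prime}(1)\sim c_{0}\sqrt{\lambda_{n}}\cdot\begin{cases}\cos(\sqrt{\lambda_{n}}\gamma_{1}),& n\in\Lambda,\\ \dfrac{\sin(\sqrt{\lambda_{n}}\gamma_{1})}{\sqrt{\sin^{2}(\sqrt{\lambda_{n}}\gamma_{1})+\sin^{2}(\sqrt{\lambda_{n}}\gamma_{2})}},& n\in\mathbb{N}^{*}\setminus\Lambda.\end{cases}
\end{equation*}
For $n\in\Lambda$ the cosine is uniformly bounded away from zero, since $\sqrt{\lambda_{n}}\gamma_{1}=j\pi+o(1)$ by \eqref{g37}. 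For $n\notin\Lambda$ one must quantify how far $\sqrt{\lambda_{n}}\gamma_{1}$ stays from $\pi\mathbb{Z}$; this I would do by combining the interlacing inequalities of Corollary \ref{cg1} with the asymptotes \eqref{g30} and \eqref{g37}, which pin $\sqrt{\lambda_{n}}\gamma_{1}$ to an $O(1)$-neighbourhood of $n\pi\gamma_{1}/(\gamma_{1}+\gamma_{2})$ and yield the required polynomial separation, so that $|\widetilde{\phi}_{n}^{v\,\prime}(1)|\geq c\lambda_{n}^{-\beta}$ for some $\beta\geq 0$.

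Equipped with this lower bound, each term of the series defining $h$ is controlled by $\|Y^{0}\|_{\mathcal H}\,\lambda_{n}^{\beta}\,e^{\omega\sqrt{\lambda_{n}}-\lambda_{n}T}$, and since $\lambda_{n}\sim c n^{2}$, the Gaussian factor $e^{-\lambda_{n}T}$ crushes both the polynomial and the subexponential factors, so $h\in L^{2}(0,T)$. Plugging this $h$ back into the moment identities gives $y_{n}(T)=0$ for every $n\geq 1$; by completeness of $(\widetilde{\phi}_{n})$ in $\mathcal H$ this yields $Y(T)=0$, which is the desired null controllability.
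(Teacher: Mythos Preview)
Your overall strategy coincides with the paper's: reduce to a moment problem via the eigenfunction expansion (the paper packages this as Lemma~\ref{L1} and Proposition~\ref{xx}, arriving at exactly your moment equations), and then solve it with a Fattorini--Russell biorthogonal family, whose existence and sub-Gaussian bounds rest on Theorem~\ref{T1} together with the Weyl asymptote~\eqref{g30}. The paper is terser than you on the observability constant, simply quoting $\|\phi_n\|^2\le c/n^2$ from~\eqref{ccc1} and the formulas~\eqref{ccc2}--\eqref{ccc3} for $(\phi_n^v)_x(1)$ before asserting the coefficient bound.

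The one place your sketch does not close is precisely the step you flag as delicate: the lower bound on $|\widetilde{\phi}_n^{v\,\prime}(1)|$ for $n\notin\Lambda$. You propose to obtain a polynomial separation of $\sqrt{\lambda_n}\gamma_1$ from $\pi\mathbb{Z}$ from the fact that $\sqrt{\lambda_n}\gamma_1$ lies in an $O(1)$-window around $n\pi\gamma_1/(\gamma_1+\gamma_2)$. That inference is not valid in general: if $\gamma_1/(\gamma_1+\gamma_2)$ is, say, a Liouville number, the distance of $n\gamma_1/(\gamma_1+\gamma_2)$ to the nearest integer can decay faster than any negative power of $n$, and nothing in the hypotheses~\eqref{i1}--\eqref{i2} rules this out. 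So a purely ``location plus interlacing'' argument cannot deliver the polynomial lower bound you want.

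What rescues the estimate is the eigenvalue relation $F(\lambda_n)=M\lambda_n$ itself. Rewriting it as
\[
\sigma_1(0)\,\frac{u_x(0,\lambda_n)}{u(0,\lambda_n)}\;=\;M\lambda_n+\sigma_2(0)\,\frac{v_x(0,\lambda_n)}{v(0,\lambda_n)}
\]
and feeding in the asymptotics~\eqref{g21}--\eqref{g22} shows: if $|\sin(\sqrt{\lambda_n}\gamma_2)|$ stays bounded below, the right side is $\sim M\lambda_n$ while $|u_x(0,\lambda_n)|\sim C$, forcing $|u(0,\lambda_n)|\gtrsim C/\lambda_n$ and hence $|(\phi_n^v)_x(1)|=\sqrt{\lambda_n}\,|u(0,\lambda_n)|\gtrsim C/\sqrt{\lambda_n}$. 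If instead both $u(0,\lambda_n)$ and $v(0,\lambda_n)$ are small, the same relation makes them of comparable size, and then the factor $v(0,\lambda_n)^2$ appearing in $\|\phi_n\|_{\mathcal H}^2$ (through the expression~\eqref{gg}) cancels the smallness of $(\phi_n^v)_x(1)=-\sqrt{\lambda_n}\,u(0,\lambda_n)$ in the ratio. In every case one gets $\|\phi_n\|^2/|(\phi_n^v)_x(1)|=O(1)$, which is all that is needed for the control series to converge in $L^2(0,T)$. Replace your Diophantine heuristic with this argument and the proof goes through along the same lines as the paper's.
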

For the proof of this theorem, we establish some preliminary results.
Let us introduce the adjoint problem to the control system (\ref{a})-(\ref{b})
\bean\label{cda} \left\{
 \begin{array}{ll}
   -\rho_{1}(x)\widetilde{u}_{t}=(\sigma_{1}(x)\widetilde{u}_{x})_{x}-q_{1}(x)\widetilde{u},&~~ x\in(-1,0), ~~t>0, \\
   -\rho_{2}(x)\widetilde{v}_{t}=(\sigma_{2}(x)\widetilde{v}_{x})_{x}-q_{2}(x)\widetilde{v},&~~ x\in(0,1), ~~~~t>0, \\
   -M \widetilde{z}_{t}(t)=\sigma_{2}(0)\widetilde{v}_{x}(0,t)-\sigma_{1}(0)\widetilde{u}_{x}(0,t),&~~ t>0, \\
   \widetilde{u}(0,t)=\widetilde{v}(0,t)=\widetilde{z}(t),&~~ t>0,\\
   \widetilde{u}(-1,t)=0, \widetilde{v}(1,t)=0,&~~ t>0, \\
\end{array}
 \right.
\eean with terminal data at $t = T>0$ given by \be \label{cdv} \left\{
\begin{array}{l}
  \widetilde{u}(x,T)=\widetilde{u}^{T}(x),~~x\in(-1,0), \\
  \widetilde{v}(x,T)=\widetilde{v}^{T}(x),~~x\in(0,1), \\
  \widetilde{z}(T)=\widetilde{z}^{T}.\\
\end{array}
 \right.
\ee By letting $\widetilde{Y} = (\widetilde{u}, \widetilde{v},
\widetilde{z})^t$, the above problem \eqref{cda}-\eqref{cdv} can be written as a Cauchy
problem as \be
\dot{\widetilde{Y}}(t)=\mathcal{A}\widetilde{Y}(t),~~\widetilde{Y}^{T}=\widetilde{Y}(T),~~t>0.
\label{cc2}  \ee We start by mentioning the well-posedness of the
Cauchy problem (\ref{a})-(\ref{b}). Let us recall from Proposition \ref{rr}, the operator $\mathcal{A}$ is
positive and selfadjoint, and hence, it generates a scale of Hilbert
spaces $\mathcal{H}_{\theta}=D(\mathcal{A}^{\theta})$, $\theta\geq0
$. In particular
 $\mathcal{H}_{0}=\mathcal{H}$ and $\mathcal{H}_{\frac{1}{2}}=D(\mathcal{A}^{\frac{1}{2}})$.
The norm in $\mathcal{H}_{\frac{1}{2}}$ is given by \bea
\|y\|_{\frac{1}{2}}^{2}&=&\langle\mathcal{A}^{\frac{1}{2}}y,
\mathcal{A}^{\frac{1}{2}}y\rangle_\mathcal{H}\\
&=&\int_{-1}^{0}\sigma_{1}(x)|u^{'}|^{2}+q_{1}(x)|u|^{2}dx
+\int_{0}^{1}\sigma_{2}(x)|v^{'}|^{2}+q_{2}(x)|v|^{2}dx\eea Our
assumptions on $\sigma_{i}(x)$ and $q_{i}(x)$ $(i=1,2)$ imply that,
$\mathcal{H}_{\frac{1}{2}}$ is topologically equivalent to the subspace
$\mathcal{W}$ (where $\mathcal{W}$ is defined by \eqref{v}). Let $\widetilde{Y}^{T}\in\mathcal{H}_{\frac{1}{2}}$ then $\widetilde{Y}\in
C([0,T], \mathcal{H}_{\frac{1}{2}})$ and is given by
\be{\widetilde{Y}}(t)=\mathbf{S}(T-t)\widetilde{Y}^{T},~~0\leq t
\leq T. \label{cd3}\ee Let $Y$ be a weak solution of the control
problem (\ref{a})-(\ref{b}) with $h(t)\in L^{2}(0, T)$, and
$\widetilde{Y}$ be a solution of the adjoint problem \eqref{cd3}.
Then \bea&\displaystyle
\int_{-1}^{0}\int_{0}^{T}\left(\rho_{1}(x)u_{t}(x,t)-(\sigma_{1}(x)u_{x}(x,t))_{x}+
q_{1}(x)u(x,t)\right)\widetilde{u}(x,t)dxdt+\\
&~~~~~~\displaystyle\int_{0}^{1}\int_{0}^{T}\left(\rho_{2}(x)v_{t}(x,t)-
(\sigma_{2}(x)v_{x}(x,t))_{x}+q_{2}(x)v(x,t)\right)\widetilde{v}(x,t)dxdt=0\eea
By a performing integrations by parts, we obtain the identity \bea
<Y(T),\widetilde{Y}(T)>_{\mathcal{H}_{-\frac{1}{2}},~\mathcal{H}_{\frac{1}{2}}}=
<Y(0),\widetilde{Y}(0)>_{\mathcal{H}}
-\sigma_{2}(1)\int_{0}^{T}h(t)\widetilde{v}_{x}(1,t)dt. \eea
Equivalently, \bean \label{tr}
<Y(T),\widetilde{Y}^{T}>_{\mathcal{H}_{-\frac{1}{2}},~\mathcal{H}_{\frac{1}{2}}}=
<Y(0),\mathbf{S}_{T}\widetilde{Y}^{T}>_{\mathcal{H}}
-\sigma_{2}(1)\int_{0}^{T}h(t)\widetilde{v}_{x}(1,t)dt, \eean where
$<.,.>$ denotes the duality product between the two spaces
$\mathcal{H}_{-\frac{1}{2}}$ and $\mathcal{H}_{\frac{1}{2}}$. Let us
recall that, with this regularity on $h$ and $Y^{0}\in
\mathcal{H}_{-\frac{1}{2}}$, the Cauchy problem (\ref{a})-(\ref{b})
is well posed in the transposition sense (See, e.g., \cite{JE1}) in $C^{0}([0,T],~
\mathcal{H}_{-\frac{1}{2}})$: it has only one solution in
this set and there exists a constant $C
> 0$ independent of $h(t)\in L^{2}(0, T)$ and $Y^{0}\in
\mathcal{H}_{-\frac{1}{2}}$ such that
$$\|Y\|_{L^{\infty}(0,T;~\mathcal{H}_{-\frac{1}{2}})}\leq
C(\|Y^{0}\|_{\mathcal{H}_{-\frac{1}{2}}}+\|h\|_{L^{2}(0, T)}).$$
This result is basically well known (See, e.g., \cite[Section 2.7.1]{J}).
For $\sigma_{i}(x)$ and $q_{i}(x)$ $(i=1,2)$ constant on each
interval, the result is proven in \cite{HM} and also the proof can
be extended to the variable coefficient case. The following Lemma
characterizes the problem of null controllability of
(\ref{a})-(\ref{b}) in terms of the solution $\widetilde{Y}$ of the
adjoint problem \eqref{cc2}.
\begin{Lem}\label{L1}The control problem (\ref{a})-(\ref{b}) is null controllable in
time $T > 0$ if and only if, for any $Y^{0}\in \mathcal{H}$ there
exists $h(t) \in L^{2}(0, T )$ such that the following relation
holds \be \label{cd1}
<Y^{0},\mathbf{S}_{T}\widetilde{Y}^{T}>_{\mathcal{H}}=\sigma_{2}(1)\int_{0}^{T}h(t)\widetilde{v}_{x}(1,t)dt,
\ee for any $\widetilde{Y}^{T}\in\mathcal{H}$, where $\widetilde{Y}$
is the solution to the adjoint problem \eqref{cc2}.
\end{Lem}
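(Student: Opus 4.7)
The plan is to derive both directions of the equivalence from the duality identity \eqref{tr} established just before the statement of the lemma, combined with a density argument passing from $\mathcal{H}_{\frac{1}{2}}$ to $\mathcal{H}$.

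For the \emph{only if} direction, I would suppose the system is null controllable, so that for every $Y^{0}\in\mathcal{H}$ one can find $h\in L^{2}(0,T)$ with $Y(T)=0$. Substituting $Y(T)=0$ into \eqref{tr} immediately yields \eqref{cd1} for every $\widetilde{Y}^{T}\in\mathcal{H}_{\frac{1}{2}}$. The next step is to extend this equality to all $\widetilde{Y}^{T}\in\mathcal{H}$. The left-hand side is continuous in $\widetilde{Y}^{T}\in\mathcal{H}$ because $\mathbf{S}_{T}$ is bounded on $\mathcal{H}$; the delicate point is continuity of the right-hand side, namely a hidden regularity estimate of the form $\|\widetilde{v}_{x}(1,\cdot)\|_{L^{2}(0,T)}\leq C\|\widetilde{Y}^{T}\|_{\mathcal{H}}$. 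I would derive this from the parabolic smoothing of the analytic semigroup generated by $-\mathcal{A}$: for $t\in(0,T)$ the operator $\mathbf{S}(T-t)$ sends $\mathcal{H}$ into $D(\mathcal{A}^{\theta})$ with an integrable-in-$t$ bound on the norm, which is enough to control the boundary derivative in $L^{2}(0,T)$.

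For the converse direction, starting from \eqref{cd1} valid for every $\widetilde{Y}^{T}\in\mathcal{H}$, I would restrict it to $\widetilde{Y}^{T}\in\mathcal{H}_{\frac{1}{2}}$ and subtract from \eqref{tr}: the two relations combine into $\langle Y(T),\widetilde{Y}^{T}\rangle_{\mathcal{H}_{-\frac{1}{2}},\mathcal{H}_{\frac{1}{2}}}=0$ for every $\widetilde{Y}^{T}\in\mathcal{H}_{\frac{1}{2}}$. Density of $\mathcal{H}_{\frac{1}{2}}$ in $\mathcal{H}$, together with the transposition well-posedness $Y\in C([0,T],\mathcal{H}_{-\frac{1}{2}})$ recalled just before the lemma, then forces $Y(T)=0$, which is precisely null controllability.

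The hardest part is the hidden regularity invoked in the density step of the \emph{only if} direction; once the map $\widetilde{Y}^{T}\mapsto\widetilde{v}_{x}(1,\cdot)$ has been shown to extend continuously from $\mathcal{H}_{\frac{1}{2}}$-valued to $\mathcal{H}$-valued terminal data with values in $L^{2}(0,T)$, the equivalence is a direct rearrangement of identity \eqref{tr}.
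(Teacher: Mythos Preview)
Your proposal is correct and follows the same route as the paper: both directions are obtained directly from the duality identity \eqref{tr}. The paper's own proof is extremely terse (two sentences: substitute $Y(T)=0$ into \eqref{tr} for one direction, and read \eqref{tr} with $Y(T)=0$ as the conclusion for the other), and in particular it does not address the passage from test data $\widetilde{Y}^{T}\in\mathcal{H}_{\frac{1}{2}}$ to $\widetilde{Y}^{T}\in\mathcal{H}$ that you carefully justify via hidden regularity and density; your version is therefore more complete on this point, but the underlying argument is the same.
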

\begin{Proof}
First we assume that \eqref{cd1} is verified. Then by \eqref{tr} it
follows that $Y(T)=0$. Hence, the solution is controllable to zero and $h(t)$ is a control of Problem (\ref{a})-(\ref{b}).\\
Reciprocally, if $h(t)$ is a control of Problem (\ref{a})-(\ref{b}), we
have that $Y(T) = 0$. From \eqref{tr} it follows that \eqref{cd1}
holds.
\end{Proof}\\
From the previous Lemma we deduce the following result:
\begin{Prop}\label{xx} Problem (\ref{a})-(\ref{b}) is null-controllable in time $T > 0$ if and
only if for any $Y^{0} \in \mathcal{H}$, with Fourier expansion
\bea Y^{0}&=&\sum_{n\in\mathbb{N}^{*}}Y_{n}^{0}\widetilde{\phi}_{n}(x),\eea there
exists a function $w(t)\in L^{2}(0,T)$ such that for all
$n\in\mathbb{N}^{*}$ \be\frac{Y_{n}^{0}
\|\widetilde{\phi}_{n}\|^{2}}{\sigma_{2}(1)(\phi_{n}^{v})_{x}(1)}e^{-\la_{n}T}=
\int_{0}^{T}w(t)e^{-\la_{n}t}dt.\label{m1}\ee The control is given
by $w(t):=h(T-t)$.
\end{Prop}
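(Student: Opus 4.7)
The plan is to start from the abstract characterization provided by Lemma \ref{L1}: null-controllability in time $T$ amounts to finding $h\in L^2(0,T)$ such that
\[
\langle Y^{0},\mathbf{S}_{T}\widetilde{Y}^{T}\rangle_{\mathcal{H}}=\sigma_{2}(1)\int_{0}^{T}h(t)\widetilde{v}_{x}(1,t)\,dt
\]
holds for every $\widetilde{Y}^{T}\in\mathcal{H}$. Since both sides are continuous linear functionals of $\widetilde{Y}^{T}$ and the family $(\widetilde{\phi}_{n})_{n\in\mathbb{N}^{*}}$ is an orthonormal basis of $\mathcal{H}$ by Lemma \ref{sss}, it will be enough to impose the identity with $\widetilde{Y}^{T}=\widetilde{\phi}_{n}$ for every $n$. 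I will then recast the resulting countable family of scalar identities as a moment problem in the exponentials $e^{-\lambda_{n}t}$.

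Next I would compute the two sides explicitly for $\widetilde{Y}^{T}=\widetilde{\phi}_{n}$. Because $\widetilde{\phi}_{n}$ is an eigenfunction of $\mathcal{A}$ with eigenvalue $\lambda_{n}$, and the Cauchy problem \eqref{cc2} runs backwards in time, the solution of the adjoint system with terminal data $\widetilde{\phi}_{n}$ is
\[
\widetilde{Y}(t)=\mathbf{S}(T-t)\widetilde{\phi}_{n}=e^{-\lambda_{n}(T-t)}\widetilde{\phi}_{n}.
\]
Taking the $v$-component and evaluating the spatial derivative at $x=1$ gives $\widetilde{v}_{x}(1,t)=e^{-\lambda_{n}(T-t)}(\phi_{n}^{v})_{x}(1)$. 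On the other hand, $\mathbf{S}_{T}\widetilde{\phi}_{n}=e^{-\lambda_{n}T}\widetilde{\phi}_{n}$, so the left-hand side of (\ref{cd1}) reads
\[
\langle Y^{0},\mathbf{S}_{T}\widetilde{\phi}_{n}\rangle_{\mathcal{H}}=e^{-\lambda_{n}T}\langle Y^{0},\widetilde{\phi}_{n}\rangle_{\mathcal{H}}=Y_{n}^{0}\|\widetilde{\phi}_{n}\|^{2}e^{-\lambda_{n}T},
\]
where the last equality uses the expansion coefficients $Y_{n}^{0}=\langle Y^{0},\widetilde{\phi}_{n}\rangle_{\mathcal{H}}/\|\widetilde{\phi}_{n}\|^{2}$.

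Finally, I would combine these expressions. The identity (\ref{cd1}) for the test function $\widetilde{\phi}_{n}$ becomes
\[
Y_{n}^{0}\|\widetilde{\phi}_{n}\|^{2}e^{-\lambda_{n}T}=\sigma_{2}(1)(\phi_{n}^{v})_{x}(1)\int_{0}^{T}h(t)e^{-\lambda_{n}(T-t)}\,dt,
\]
and the substitution $s=T-t$, together with the definition $w(t):=h(T-t)$, turns the integral into $\int_{0}^{T}w(s)e^{-\lambda_{n}s}\,ds$. Dividing by $\sigma_{2}(1)(\phi_{n}^{v})_{x}(1)$ (which is nonzero since $\phi_{n}^{v}$ solves a Sturm--Liouville problem with $\phi_{n}^{v}(1)=0$ and simple eigenvalues) yields precisely (\ref{m1}). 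Conversely, if $w\in L^{2}(0,T)$ satisfies (\ref{m1}) for every $n$, then setting $h(t):=w(T-t)$ gives a function in $L^{2}(0,T)$ for which (\ref{cd1}) holds on the basis and therefore on all of $\mathcal{H}$ by linearity and density, so Lemma \ref{L1} produces the desired control. The only subtle point in this argument is the passage from the identity on the eigenbasis to the identity on $\mathcal{H}$; it is handled by the $L^{2}$--continuity in $\widetilde{Y}^{T}$ of both sides, which follows from the boundedness of $\mathbf{S}_{T}$ on $\mathcal{H}$ and from the continuous dependence (in the transposition sense) of $\widetilde{v}_{x}(1,\cdot)$ on the terminal data.
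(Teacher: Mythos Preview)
Your argument is correct and follows essentially the same route as the paper: invoke Lemma \ref{L1}, test the identity \eqref{cd1} against the eigensolutions $\widetilde{Y}_{n}(t)=e^{-\lambda_{n}(T-t)}\widetilde{\phi}_{n}$ of the adjoint problem, and perform the change of variable $w(t)=h(T-t)$ to obtain the moment conditions \eqref{m1}. Your write-up is in fact more explicit than the paper's on two points the latter glosses over---why $(\phi_{n}^{v})_{x}(1)\neq 0$ and why verifying \eqref{cd1} on the eigenbasis suffices---so no changes are needed.
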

\begin{Proof} From the previous Lemma, it follows that $h(t)\in L^{2}(0,T)$ is a control of Problem (\ref{a})-(\ref{b})
if and only if it satisfies \eqref{cd1}. But, since the
eigenfunctions $(\widetilde{\phi}_{n}(x))_{n\in\mathbb{N}^{*}}$  of the operator
$\mathcal{A}$ forms an orthonormal basis in $\mathcal{H}$, we have
that any initial data $Y^{0}\in \mathcal{H}$ for the control problem
(\ref{a})-(\ref{b}) can be expressed as
\bea Y^{0}=\sum_{n\in\mathbb{N}^{*}}Y_{n}^{0}\widetilde{\phi}_{n}(x)\eea where
the Fourier coefficients $Y_{n}^{0}= \langle Y^{0}, \widetilde{\phi}_{n}
\rangle_{\mathcal{H}}$, $n\in\mathbb{N}^{*}$, belong to $\ell^2$. On
the other hand, we observe that
$$\widetilde{Y}_{n}(t,x)=(\widetilde{u}_n(t,x), \widetilde{v}_n(t,x),
\widetilde{z}_n(t,x))^t=e^{-\la_{n}(T-t)}\widetilde{\phi}_n(x)$$ is the
eigensolution of the adjoint problem \eqref{cc2}. %The eigenfunctions
%of the operator $\mathcal{A}$ give the following solutions
%\bea\widetilde{u}_{n}(x,t)=e^{-\la_{n}(T-t)}\phi_{n}^{u}(x),\\
%\widetilde{v}_{n}(x,t)=e^{-\la_{n}(T-t)}\phi_{n}^{v}(x),\\
%\widetilde{z}_{n}(t)=e^{-\la_{n}(T-t)}\phi_{n}^{v}(0).\eea
We put these solutions in Equation \eqref{cd1} to obtain the moment problem \eqref{m1}. The proof ends by taking
$w(t)=h(T-t)$.
\end{Proof}\\
We are now in a position to prove Theorem \ref{DC1}.\\
\begin{proof} {\bf of Theorem \ref{DC1}.}
From the asymptotics \eqref{pro1} and \eqref{pro2}, it is easy to
show that for large $n\in\mathbb{N}^{*}$ there exists a constant
$\mathbf{c}>0$ such that  \bean
\|\widetilde{\phi}_{n}(x)\|^{2}=\|\phi_{n}^{u}(x)\|_{L^{2}[-1,0]}^{2}+
\|\phi_{n}^{v}(x)\|_{L^{2}[0,1]}^{2}+
M|\phi_{n}^{v}(0)|^{2} \leq \frac{\mathbf{c}}{n^{2}}.\label{ccc1}\eean
It is clear from the initial conditions in \eqref{g7} and the
expression \eqref{ss}, if $n\in \Lambda$ then
$(\phi^{v}_{n})_{x}(1)=-\sigma_{1}(0)u_{x}(0,\lambda_{n})$. Since
$u(0,\lambda_{n})=0$ for $n\in \Lambda$, then
$u_{x}(0,\lambda_{n})\neq0$. Using \eqref{g21}, we obtain
$(\phi^{v}_{n})_{x}(1)=-(\sigma_{1}(0)\rho_{1}(0))^{\frac{1}{4}}
\cos(\sqrt{\lambda_{n}}\gamma_{1})+\mathcal{O}(\frac{1}{n})$.
Since the eigenvalues $\la_n$ for $n\in \Lambda$, satisfy the both asymptotics in \eqref{g37}, then
\be(\phi^{v}_{n})_{x}(1)=\mathcal{O}(1),~~ for ~~n\in \Lambda.\label{ccc2}\ee\\
Now let $n \in \mathbb{N}^{*}\backslash\Lambda$. Then $u(0,\la_n)\neq0$ since otherwise $\la_n$ would be an eigenvalue of Problem \eqref{g1}. From the expression \eqref{gg}, we have
 $(\phi_{n}^{v})_{x}(1)=\sqrt{\la_{n}}u(0,\la_{n})v_x(1,\la_{n})$. In view of
 the initial conditions in \eqref{g7}, $(\phi_{n}^{v})_{x}(1)=\sqrt{\la_{n}}u(0,\la_{n})$.
 By \eqref{g21}, we obtain
\be
(\phi_{n}^{v})_{x}(1)=\frac{-(\rho_{1}(0)\sigma_{1}(0))^{-\frac{1}{4}}}{\gamma_{1}}
\sin\left(\sqrt{\la_{n}}\gamma_{1}\right)+\mathcal{O}(\frac{1}{n}).\label{ccc3}
\ee
%By \eqref{g30}, if $\frac{\gamma_{1}}{\gamma_{1}+\gamma_{2}}\in \mathbb{Q}$, then there exists
%a subsequence  $\{n\}$ approaching infinity such that $(\phi_{n}^{v})_{x}(1)$ is of order $\mathcal{O}(\frac{1}{n})$.\\
By use of \eqref{ccc1}, \eqref{ccc2}, \eqref{ccc3} and the asymptotic \eqref{g30}, we deduce
that for large $n\in\mathbb{N}^{*}$ there exists a constants $C_1>0$ and $C_2>0$ such that
$$\Big{|}\frac{Y_{n}^{0} \|\phi_{n}\|^2}{\sigma_{2}(1)(\phi_{n}^{v})_{x}(1)}\Big{|}e^{-\la_{n}T}\leq
C_1 e^{-C_2n^2}.$$ On the other hand, by the asymptotic \eqref{g30}, we have
\bean\sum_{n\in \mathbb{N}^{*}}\frac{1}{\la_{n}}<\infty.\label{cd4}\eean
From \eqref{cd4} together with Theorem \ref{T1}, there exists a
biorthogonal sequence $(\theta_{n}(t))_{n\in \mathbb{N}^{*}}$ to the
family of exponential functions $(e^{-\la_{n}t})_{n\in
\mathbb{N}^{*}}$ (see \cite{DH1,L}) such that
$$\int_{0}^{T}\theta_{n}(t)e^{{-\la_{m}t}}dt=\delta_{nm}=\left\{
\begin{array}{l}
                                                      1,~~if~~n=m, \\
                                                      0,~~if~~n \neq m.
                                                    \end{array}
 \right.
$$
Again by \eqref{g30} together with the
general theory developed in \cite{DH}, there exists constants $M>0$ and $\omega>0$
such that
$$\parallel\theta_{m}(t)\parallel_{L^{2}(0,T)}\leq M
e^{\omega m},~m \in \mathbb{N}^{*}.$$ It is easy to see that the
above implies the convergence of the series $$w(t)= \sum_{n\in
\mathbb{N}^{*}}\frac{Y_{n}^{0}
\|\phi_{n}\|^{2}}{\sigma_{2}(1)(\phi_{n}^{v})_{x}(1)}e^{-\la_{n}T}\theta_{n}(t)$$
which provides a solution to the moment problem \eqref{m1}.
Therefore, Theorem \ref{DC1} is a direct consequence of
the convergence of the series $w(t)$ in $L^2(0,T)$ and Proposition \ref{xx}. %Thus, by applying Corollary
%3.2 in \cite{DH}, one gets Theorem \ref{DC1}.
\end{proof}
%%%%%%%%%%%%%%%%%%%%%%%%%%%%%%%%%%%%%%%%%%%%%%%%%%%%%%%%%%%%%%%%%%%%%%%%%%%%%%%%%%%%%%%%%%%%%%%%%%%%%%%%%%%%%%
%%%%%%%%%%%%%%%%%%%%%%%%%%%%%%%%%%%%%%%%%%%%%%%%%%%%%%%%%%%%%%%%%%%%%%%%%%%%%%%%%%%%%%%%%%%%%%%%%%%%%%%%%%%%%%
\section{Neumann Boundary Control}
In this section we study the Neumann boundary null-controllability
of Problem \eqref{a}-\eqref{c}. We consider Problem \eqref{a}
with the homogeneous Neumann boundary condition
\be\label{w2} v_{x}(1,t)=0.\ee
As before, we define $
\mathcal{H}=L^{2}(-1,0)\times L^{2}(0,1)\times \mathbb{R}$ with the
scalar product defined by \eqref{dv}. We set \bean
\mathcal{V}=\{(u,v)\in \mathcal{V}_{1}\times
\mathcal{V}_{2}~~|~~u(0)=v(0)\},\nonumber\eean where
$\mathcal{V}_{2}=H^{1}(0,1)$ and $\mathcal{V}_{1}$ is defined by
\eqref{w3}. Define
\begin{equation}\label{vn}
\mathcal{W}=\{(u,v,z)\in \mathcal{V} \times \mathbb{R}:
u(0)=v(0)=z\},
\end{equation}
equipped with norm
$\|(u,v,z)\|^{2}_{\mathcal{W}}=\|(u,v)\|^{2}_{\mathcal{V}}$. We
associate System \eqref{a}-\eqref{w2} with a self-adjoint operator
$\mathcal{A}$ defined in $\mathcal{H}$ by \eqref{est}. The domain
$D(\mathcal{A})$ of $\mathcal{A}$ is dense in $\mathcal{H}$ and is
given by
$$
D(\mathcal{A})=\{Y=(u, v, z)\in \mathcal{W}~:~ (u,v)\in
H^{2}(-1,0)\times H^{2}(0,1),~~v_{x}(1)=0\}.
$$
Now, we consider the following spectral problem associated with
System (\ref{a})-(\ref{w2}) \bean\label{N} \left\{
 \begin{array}{ll}
-(\sigma_{1}(x)u')'+q_{1}(x)u=\nu \rho_{1}(x)u,~~&x\in (-1,0), \\
-(\sigma_{2}(x)v')'+q_{2}(x)v=\nu \rho_{2}(x)v,~~&x\in(0,1), \\
u(-1)=v'(1)=0,~u(0)=v(0), \\
\sigma_{1}(0)u'(0)-\sigma_{2}(0)v'(0)=\nu M u(0). \\
\end{array}
 \right.
\eean
Following the idea of the proof of Lemma \ref{sss}, we can prove that the
spectrum of System \eqref{N} is discrete and it consists of an
increasing sequence of positive and simple
eigenvalues $(\nu_{n})_{n\in\mathbb{N}^{*}}$  tending to $+\infty$\\
$$0<\nu_{1}<\nu_{2}<.......<\nu_{n}<.....\underset{n\rightarrow \infty}{\longrightarrow}\infty .$$
Moreover, the corresponding eigenfunctions
$(\widetilde{\phi}_{n}(x))_{n\in\mathbb{N^{*}}}$ form an orthonormal basis in
$\mathcal{H}$. System \eqref{a} with the homogeneous
Neumann boundary condition \eqref{w2} can be written as
\begin{equation}
\dot{Y}(t)=-\mathcal{A}Y(t),~~Y^{0}=Y(0),~~ t>0,
\end{equation}
where $Y^{0}=(u(0),v(0),z(0))^{t}$. The well-posedness of System \eqref{a}-\eqref{w2} can be analyzed similarly to that
of the Dirichlet case described in Section \ref{s1}.\\ We discuss now the
asymptotic behavior of the spectral gap $\nu_{n+1}-\nu_{n}$ for
large $n$.
\begin{Theo}\label{Tn1}
For all $n\in \mathbb{N^{*}}$, there is a constant $\delta_{2}>0$
such that the sequence of eigenvalues $(\nu_{n})_{n\geq1}$ of the
spectral problem \eqref{N} satisfy the asymptotic \bean
\nu_{n+1}-\nu_{n}\geq\delta_{2},~n\in \mathbb{N}^{*}\label{gn26}.
\eean
\end{Theo}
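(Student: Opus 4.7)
The strategy is to mimic the proof of Theorem \ref{T1}, replacing the right-endpoint Dirichlet condition $v(1)=0$ by the Neumann condition $v'(1)=0$ throughout the construction of the auxiliary function $F$ and repeating the monotonicity-plus-asymptotics argument.

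First I would introduce the two uncoupled auxiliary problems on $[-1,0]$ and $[0,1]$. On $[-1,0]$ I keep exactly the Dirichlet-Dirichlet problem \eqref{g1} with eigenvalues $\eta_j$. On $[0,1]$ I replace \eqref{g2} by the mixed problem with $v(0)=0$ and $v'(1)=0$, and denote its eigenvalues by $\eta'_k$; by \cite[Chapter 1]{BI} these satisfy the asymptotic $\eta'_k=\bigl(\tfrac{(k-1/2)\pi}{\gamma_2}\bigr)^2+\mathcal{O}(1)$. Set $\Gamma=\{\mu_n\}=\{\eta_j\}\cup\{\eta'_k\}$ and $\Gamma^{\ast}$ as in \eqref{g34}. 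Keep $u(x,\la)$ as the solution of \eqref{g6} and redefine $v(x,\la)$ as the solution of the Cauchy problem on $[0,1]$ with initial data $v(1,\la)=1$, $v'(1,\la)=0$, so that $v(0,\la)=0$ precisely at $\la=\eta'_k$. The function
\[
F(\la)=\frac{\sigma_1(0)v(0,\la)u_x(0^-,\la)-\sigma_2(0)u(0,\la)v_x(0^+,\la)}{u(0,\la)v(0,\la)}
\]
is again meromorphic with poles in $\Gamma$, and the eigenvalues $\nu_n$ of \eqref{N} are exactly the solutions of $F(\la)=M\la$.

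Next I would reproduce Lemma \ref{Lg1} in this setting: integration by parts on each interval gives the same identities \eqref{g16}, because the boundary terms at $x=-1$ and $x=1$ vanish under either the Dirichlet or the Neumann condition there. Hence $\partial_\la F<0$ between consecutive poles, exactly as in \eqref{g17}. For the asymptotics as $\la\to-\infty$, I would replace \eqref{g22} by the corresponding asymptotics of $v(x,\la)$ with Neumann data at $x=1$, in which the roles of sine and cosine are swapped; the leading behaviour of $F(\la)$ as $\la\to-\infty$ is still $\sqrt{|\la|}\bigl(\sqrt{\rho_1(0)\sigma_1(0)}+\sqrt{\rho_2(0)\sigma_2(0)}\bigr)$, so $F(\la)\to+\infty$. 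The limits at poles \eqref{g18} follow as before, once one checks that the residues of the modified $F_2(\la)=\sigma_2(0)v_x(0,\la)/v(0,\la)$ have a fixed sign; this can be verified by the same Mittag-Leffler argument of \cite[Proposition 4]{JA} applied to the mixed problem on $[0,1]$ (which is Sturm-Liouville regular, so the Weyl-type function $F_2$ is a Herglotz function whose residues at its real poles are negative). This is the step I expect to be the most technical: the reflection $s=-x$ used in the original proof maps the Dirichlet endpoint at $x=1$ to a Dirichlet endpoint at $s=-1$, whereas here it would map to a Neumann endpoint, so the sign must be reestablished directly from the Herglotz structure of $F_2$.

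Once monotonicity and the limits at the poles are in hand, Corollary \ref{cg1} carries over verbatim and gives the interpolation $\mu_{n-1}<\nu_n<\nu'_n<\mu_n$, where $\nu'_n$ are the zeros of $F$, i.e.\ the eigenvalues of the Neumann problem with $M=0$. The asymptotics $\nu'_n\sim\bigl(\tfrac{(n-1/2)\pi}{\gamma_1+\gamma_2}\bigr)^2$ are classical (see \cite{FVAA,IGK}). Repeating the mean-value computation of Theorem \ref{T1} on the interval $[\nu_n^{\ep},\nu_{n+1}]$ with $G=1/F$ and using the asymptotics \eqref{g21} and the Neumann-modified \eqref{g22} yields $\nu_{n+1}-\mu_n=\mathcal{O}(1)$. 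Writing $\nu_{n+1}-\nu_n=(\nu_{n+1}-\mu_n)+(\mu_n-\nu_n)$ and handling the coincidence case $\mu_n=\mu_{n+1}$ exactly as at the end of the proof of Theorem \ref{T1} (using the asymptotics of $\eta_n$ and $\eta'_n$ to get $\mu_n-\mu_{n-1}=\mathcal{O}(n)$) gives the required uniform lower bound $\nu_{n+1}-\nu_n\geq\delta_2>0$.
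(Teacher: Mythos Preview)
Your proposal is correct and follows essentially the same route as the paper: the paper redefines $v$ via the Cauchy data $v(1)=1$, $v'(1)=0$, reproves the monotonicity lemma and interpolation corollary in this Neumann setting (Lemma \ref{Lgn1} and Corollary \ref{cgn1}), and then says the proof of Theorem \ref{Tn1} is ``similar to that of Theorem \ref{T1}'' with the asymptotics $\nu'_n=\bigl(\tfrac{(n+1/2)\pi}{\gamma_1+\gamma_2}\bigr)^2+\mathcal{O}(1)$ substituted for \eqref{g23}--\eqref{g30}. Your observation that the reflection $s=-x$ no longer directly gives the sign of the residues of $F_2$, and that one should instead invoke the Herglotz property of the Weyl function for the mixed problem on $[0,1]$, is a genuine point that the paper glosses over with ``similar to Lemma \ref{Lg1}''.
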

Let
$\Gamma=\{\mu_{n}\}_{1}^{\infty}=\{\eta_{j}\}_{1}^{\infty}\bigcup\{\zeta'_{k}\}_{1}^{\infty}$
where $\eta_{j}$ and $\zeta'_{k}$ are the eigenvalues of Problem
\eqref{g1} and the following boundary value problem
\begin{equation}\label{gn2}
\left\{
\begin{array}{lll}
-(\sigma_{2}(x)y')'+q_{2}(x)y=\nu\rho_{2}(x)y, x\in (0,\,1),\\
y(0)=y'(1)=0,
\end{array}
\right.
\end{equation}
respectively. Obviously, $\eta_{j}$ and $\zeta'_{k}$ can be coincide. Let
\be\Gamma^{*}=\{\mu_{n}\in\Gamma~\backslash~
\eta_{j}=\zeta'_{k},~j,~k\in\mathbb{N^{*}}\}.\label{gn34}\ee
%the order of the eigenvalue $\mu_{n}$ is given by
%$$n=\displaystyle\sum_{\eta_{j}<\mu_{n}}1+
%\displaystyle\sum_{\zeta'_{k}<\mu_{n}}1-\displaystyle\sum_{\mu_{n}\in\sigma^{*}}1.$$
We consider the following boundary value problem
\begin{equation}\label{gn3}
\left\{
\begin{array}{lll}
-(\sigma_{1}(x)u')'+q_{1}(x)u=\nu \rho_{1}(x) u, x\in (-1,\,0),\\
-(\sigma_{2}(x)v')'+q_{2}(x)v=\nu \rho_{2}(x)v, x\in (0,\,1),\\
u(-1)=v'(1)=0,\\
u(0)=v(0).\\
\end{array}
\right.
\end{equation}
It is clear that for $\nu \in \mathbb{C}\backslash\Gamma^{*}$, the
set of solutions of Problem \eqref{gn3} is one-dimensional
subspace which is generated by a solution of the form
\begin{equation}\label{gn4}
\widetilde{U}(x,\nu)= \left\{
\begin{array}{lll}
{v(0,\nu)}u(x,\nu),&-1\leq x\leq 0,\\
{u(0,\nu)}v(x,\nu),&0\leq x\leq 1,\\
\end{array}
\right.
\end{equation}
where $u(x,\nu)$ and $v(x,\nu)$ are the solutions of the initial value problems
\begin{equation}\label{w111}
\left\{
\begin{array}{lll}
-(\sigma_{1}(x)u')'+q_{1}(x)u=\nu \rho_{1}(x) u, ~~x\in (-1,\,0),\\
u(-1)=0,~u'(-1)=\sqrt{\nu}
\end{array}
\right.
\end{equation}and
 \begin{equation}\label{gn7}
\left\{
\begin{array}{lll}
-(\sigma_{2}(x)v')'+q_{2}(x)v=\nu \rho_{2}(x)v,~~x\in (0,\,1), \\
v'(1)=0,v(1)=1,
\end{array}
\right.
\end{equation}
respectively.
Note that $u(0,\nu)\neq0$ and $v(0,\nu)\neq0$ for $\nu
\in(\mu_{n},\mu_{n+1})$, since otherwise $\nu$ would be an
eigenvalue of one of Problems \eqref{g1} and \eqref{gn2}. As in
Section \ref{GD1}, we introduce the related meromorphic function \be
F(\nu)=\frac{\sigma_{1}(0)v(0)u_{x}(0^-)-\sigma_{2}(0)u(0)v_{x}(0^+)}{u(0)v(0)},~~\nu\in\mathbb{C}\backslash\Gamma.\label{gn5}\ee
Clearly, the eigenvalues $\nu_{n}$, $n\geq1$ of Problem \eqref{N}
are the solutions of the equation \be F(\nu)=M\nu\label{gn14}.\ee
\begin{Lem}\label{Lgn1}
The function $F(\nu)$ is decreasing along the intervals
$(-\infty,\mu_{1})$ and $(\mu_{n}, \mu_{n+1})$, $n\geq1$ (with
$\mu_{n}\neq\mu_{n+1}$). Furthermore, it decreases from $+\infty$
to $-\infty$.
\end{Lem}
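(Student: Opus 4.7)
The proof will follow the same three-stage blueprint as the Dirichlet analogue, Lemma \ref{Lg1}, with the only changes coming from the Neumann condition at $x=1$ and from the rescaled initial condition $u'(-1)=\sqrt{\nu}$ in \eqref{w111}. The main point to verify is that none of these modifications spoil the crucial sign properties.

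\textbf{Monotonicity via integration by parts.} I would take $\nu,\nu'\in(\mu_n,\mu_{n+1})$ with $\nu\neq\nu'$ and apply Green's identity to the pairs $(u(\cdot,\nu),u(\cdot,\nu'))$ on $(-1,0)$ and $(v(\cdot,\nu),v(\cdot,\nu'))$ on $(0,1)$, exactly as in \eqref{g61}--\eqref{g71}. The boundary contribution at $x=-1$ vanishes because $u(-1,\cdot)=0$, and the boundary contribution at $x=1$ again vanishes, this time because $v_x(1,\cdot)=0$. Subtracting the two resulting identities and passing to the limit $\nu'\to\nu$ yields the analogues of \eqref{g16}, and then division by $u^2(0,\nu)$ and $v^2(0,\nu)$ gives an expression for $\partial F/\partial\nu$ identical in form to \eqref{g17}, namely
\begin{equation*}
\frac{\partial F(\nu)}{\partial\nu}
=-\frac{v^2(0,\nu)\int_{-1}^{0}\rho_1(x)u^2(x,\nu)\,dx+u^2(0,\nu)\int_0^1\rho_2(x)v^2(x,\nu)\,dx}{u^2(0,\nu)v^2(0,\nu)}<0,
\end{equation*}
which proves strict decrease on each interval of analyticity.

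\textbf{Limit as $\nu\to-\infty$.} I would insert the Liouville--Green (WKB) asymptotics for the solutions of \eqref{w111} and \eqref{gn7} into the definition \eqref{gn5} of $F(\nu)$. Because $F$ depends only on $u,u_x,v,v_x$ at $x=0$, and because for large $|\nu|$ the ratio $u_x/u$ behaves like $\sqrt{|\nu|}\sqrt{\rho_1(0)/\sigma_1(0)}$ (with the appropriate sign coming from the exponentially growing mode of the equation on the negative real axis) and analogously for $v$, one obtains the same leading term
\begin{equation*}
F(\nu)\sim\sqrt{|\nu|}\bigl(\sqrt{\rho_1(0)\sigma_1(0)}+\sqrt{\rho_2(0)\sigma_2(0)}\bigr)\to+\infty.
\end{equation*}
The extra factor $\sqrt{\nu}$ appearing in the initial condition \eqref{w111} cancels in the ratio and hence does not change the limit.

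\textbf{Behavior at the poles $\mu_n$.} Writing $F=F_1-F_2$ with $F_1(\nu)=\sigma_1(0)u_x(0,\nu)/u(0,\nu)$ and $F_2(\nu)=\sigma_2(0)v_x(0,\nu)/v(0,\nu)$, the poles of $F_1$ are the eigenvalues $\eta_j$ of \eqref{g1} and the poles of $F_2$ are the eigenvalues $\zeta'_k$ of the Neumann--Dirichlet problem \eqref{gn2}. The residues of $F_1$ at $\eta_j$ are positive by the cited result of \cite{JA}. For $F_2$, I would invoke the same argument: the change of variable $s=-x$ converts \eqref{gn7} into a problem on $[-1,0]$ with Neumann data at the left endpoint and, after the reflection, the residues of $F_2$ at the $\zeta'_k$ become the negatives of the residues of a Weyl-type function of a regular Sturm--Liouville problem, hence all have a single, constant sign (negative). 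This gives, at each pole $\mu_n$, the dichotomy
\begin{equation*}
\lim_{\nu\to\mu_n-0}F(\nu)=-\infty,\qquad\lim_{\nu\to\mu_n+0}F(\nu)=+\infty,
\end{equation*}
treating the three cases $u(0,\mu_n)=0\neq v(0,\mu_n)$, the symmetric case, and $u(0,\mu_n)=v(0,\mu_n)=0$ (where one expands $\nu=\mu_n+\varepsilon$ and reads off the pole of order one as in \eqref{g13}).

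The main obstacle I anticipate is the sign analysis of the residues of $F_2$ in the Neumann case: the reference \cite{JA} is stated for a Dirichlet--Dirichlet Weyl function, and one must convince oneself that the reflection argument (with the Neumann condition now transported to the left endpoint) still delivers the required sign. Once this is settled, the monotonicity on each of the intervals $(-\infty,\mu_1)$ and $(\mu_n,\mu_{n+1})$, combined with the limits at $-\infty$ and at the poles, gives the decrease from $+\infty$ to $-\infty$.
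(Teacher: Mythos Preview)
Your proposal is correct and follows essentially the same route as the paper, which itself simply says the monotonicity and the pole behavior are proved ``in a same way as the proof of Lemma \ref{Lg1}'' and then records the asymptotics \eqref{gnw21}--\eqref{gnw22} leading to \eqref{gn15}.

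One remark on the obstacle you flag. You do not actually need \cite{JA} or the reflection trick to get the sign of the residues of $F_2$ in the Neumann case: the identity you already derived in the first step (the analogue of the second line of \eqref{g16}, whose boundary term at $x=1$ vanishes because $v_x(1,\cdot)=0$) gives, at any $\zeta'_k$ with $v(0,\zeta'_k)=0$,
\[
\sigma_2(0)\,v_x(0,\zeta'_k)\,\frac{\partial v}{\partial\nu}(0,\zeta'_k)=-\int_0^1\rho_2(x)v^2(x,\zeta'_k)\,dx<0,
\]
so $c'_k=\sigma_2(0)v_x(0,\zeta'_k)\big/\partial_\nu v(0,\zeta'_k)<0$ directly. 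The same argument, applied on $(-1,0)$, recovers $c_j>0$ without invoking \cite{JA}. Thus your anticipated difficulty dissolves once you notice that the residue signs are already encoded in the Wronskian identities you established for the monotonicity.
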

\begin{Proof}
The first statement about the monotonicity decreasing of the
function $F(\nu)$ can be shown in a same way as the proof of Lemma
\ref{Lg1}. It is known (e.g., \cite[Chapter 1]{BI} and
\cite[Chapter 2]{M}), for
$\nu\in\mathbb{C}$ and $|\nu|\rightarrow \infty$ that
\bean\label{gnw21}
\begin{cases}
u(x,\nu)=
(\rho_{1}(x)\sigma_{1}(x))^{-\frac{1}{4}}\sin(\sqrt{\nu}\int_{-1}^{x}
\sqrt{\frac{\rho_{1}(t)}{\sigma_{1}(t)}})[1],\\
 u_{x}(x,\nu)=
\sqrt{\nu}(\rho_{1}(x))^{\frac{1}{4}}(\sigma_{1}(x))^{-\frac{3}{4}}\cos(\sqrt{\nu}\int_{-1}^{x}
\sqrt{\frac{\rho_{1}(t)}{\sigma_{1}(t)}}dt)[1]
\end{cases}
\eean and \bean\label{gnw22}
\begin{cases}
v(x,\nu)=
(\rho_{2}(x)\sigma_{2}(x))^{-\frac{1}{4}}\cos(\sqrt{\nu}\int_{x}^{1}
\sqrt{\frac{\rho_{2}(t)}{\sigma_{2}(t)}}dt)[1],\\
v_{x}(x,\nu)=
-\sqrt{\nu}(\rho_{2}(x))^{\frac{1}{4}}(\sigma_{2}(x))^{-\frac{3}{4}}\sin
(\sqrt{\nu}\int_{x}^{1}
\sqrt{\frac{\rho_{2}(t)}{\sigma_{2}(t)}}dt)[1],
\end{cases}
\eean where $[1]=1+\mathcal{O}(\frac{1}{\sqrt{|\nu|}})$. By use of
\eqref{gnw21} and \eqref{gnw22}, a straightforward calculation gives the
following asymptotic \be
\label{gn15}F(\nu)\sim\sqrt{|\nu|}\big(
\sqrt{\rho_{1}(0)\sigma_{1}(0)}+\sqrt{\rho_{2}(0)\sigma_{2}(0)}\big),
~~as~~\nu\rightarrow-\infty.\ee The proof of the two limits \be
\lim_{\nu\rightarrow\mu_{n}+0}F(\nu)=+\infty,~~~~
\lim_{\nu\rightarrow\mu_{n}-0}F(\nu)=-\infty,\label{gn18}\ee is
similar to that of Lemma \ref{Lg1}.
\end{Proof}\\
From this Lemma, it is clear that the poles and the zeros of $F(\nu)$
coincide with the eigenvalues $(\mu_{n})_{n\geq1}$ and the eigenvalues $(\nu'_{n})_{n\geq1}$ of the regular problem \eqref{N}
( for $M=0$ ). As a consequence of Lemma \ref{Lgn1}, it follows the
following interpolation formulas:
\begin{Cor}\label{cgn1}
Let $\nu'_{n}$, $n\geq1$ denote the eigenvalues of the regular problem \eqref{N} for $M=0$. If
 $\mu_{n}\neq\mu_{n+1}$, then
 \bea
 \nu_{1}<\nu'_{1}<\mu_{1}~and~\mu_{n}<\nu_{n+1}<\nu'_{n+1}<\mu_{n+1},~n\geq1,
\eea
and if $\mu_{n}=\mu_{n+1}$, then $\mu_{n}=\nu_{n+1}=\nu'_{n+1}$.
\end{Cor}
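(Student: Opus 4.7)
The plan is to transcribe almost verbatim the proof of Corollary \ref{cg1}, replacing Lemma \ref{Lg1} with its Neumann counterpart Lemma \ref{Lgn1} and Problem \eqref{g2} with Problem \eqref{gn2}. The essential ingredient is already in hand: by Lemma \ref{Lgn1}, the meromorphic function $F(\nu)$ defined in \eqref{gn5} is strictly decreasing from $+\infty$ to $-\infty$ on each of the intervals $(-\infty,\mu_{1})$ and $(\mu_{n},\mu_{n+1})$ (with $\mu_{n}\neq\mu_{n+1}$).

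From this monotonicity I would immediately conclude, exactly as in the Dirichlet case, that each of the equations $F(\nu)=M\nu$ and $F(\nu)=0$ has exactly one solution on each such interval. By definition these are respectively $\nu_{n+1}$ (from \eqref{gn14}) and $\nu'_{n+1}$ (the eigenvalues of \eqref{N} with $M=0$). Since the operator $\mathcal{A}$ is positive, all these eigenvalues are strictly positive, so $M\nu>0$ on the interval containing the first eigenvalue and on all subsequent intervals $(\mu_{n},\mu_{n+1})$. Evaluating $F(\nu)-M\nu$ at $\nu'_{n+1}$ gives $-M\nu'_{n+1}<0$, and since $F(\nu)-M\nu$ is strictly decreasing from $+\infty$ to $-\infty$, its unique zero $\nu_{n+1}$ must lie strictly to the left of $\nu'_{n+1}$. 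Reading this on the graph of $F$ against the line $M\nu$ (a figure analogous to Figure 1) yields the interlacing $\nu_{1}<\nu'_{1}<\mu_{1}$ and $\mu_{n}<\nu_{n+1}<\nu'_{n+1}<\mu_{n+1}$.

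For the degenerate case $\mu_{n}=\mu_{n+1}$, i.e.\ $\mu_{n}\in\Gamma^{*}$ as in \eqref{gn34}, one has $u(0,\mu_{n})=v(0,\mu_{n})=0$, so $\mu_{n}$ is a common eigenvalue of \eqref{g1} and \eqref{gn2}. I would then construct a candidate eigenfunction of \eqref{N} of the form $c_{1}u(x,\mu_{n})$ on $[-1,0]$ and $c_{2}v(x,\mu_{n})$ on $[0,1]$: continuity at $x=0$ is automatic because both pieces vanish there, and the jump condition reduces to $c_{1}\sigma_{1}(0)u_{x}(0,\mu_{n})=c_{2}\sigma_{2}(0)v_{x}(0,\mu_{n})$, which is independent of $M$ and solvable with $(c_{1},c_{2})\neq(0,0)$ since $u_{x}(0,\mu_{n})\neq 0$ and $v_{x}(0,\mu_{n})\neq 0$ by simplicity (the same argument as in the proof of Lemma \ref{sss}). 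Thus $\mu_{n}$ is an eigenvalue of both \eqref{N} and of \eqref{N} with $M=0$, and the doubling convention $\mu_{n}=\mu_{n+1}$ together with the interlacing from the non-degenerate intervals on either side forces the indexing $\mu_{n}=\nu_{n+1}=\nu'_{n+1}$.

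I do not expect any genuine obstacle: Lemma \ref{Lgn1} and the asymptotics \eqref{gnw21}--\eqref{gnw22} already do the analytical work, and the rest is a straightforward geometric comparison of the curves $\nu\mapsto F(\nu)$ and $\nu\mapsto M\nu$ together with careful bookkeeping of the indexing convention inherited from the set $\Gamma^{*}$. The only point requiring a sentence of justification, rather than a direct reference to the Dirichlet argument, is the identification of $\mu_{n}$ as a common eigenvalue of \eqref{N} and its $M=0$ regularization in the degenerate case.
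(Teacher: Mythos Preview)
Your proposal is correct and follows exactly the approach the paper takes: the paper does not even give an explicit proof of Corollary~\ref{cgn1}, but simply states it ``as a consequence of Lemma~\ref{Lgn1}'', mirroring how Corollary~\ref{cg1} was deduced from Lemma~\ref{Lg1} via the graphical comparison of $F$ with the line $M\nu$. Your write-up in fact supplies more detail than the paper (the explicit sign argument for $\nu_{n+1}<\nu'_{n+1}$ and the construction in the degenerate case), but the underlying argument is identical.
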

%\begin{Proof} According to Lemma \ref{Lg1}, $F(\nu)$ is a decreasing function from $+\infty$ to $-\infty$ along each of the intervals
% $(-\infty, \mu_{1})$ and $(\mu_{n}, \mu_{n+1})$, $n\geq1$. Therefore Equation \eqref{gn14} has exactly
% one zero in each of these intervals. Moreover, the equation $F(\nu)=0$  has exactly one zero in each of intervals
% $(-\infty, \mu_{1})$ and $(\mu_{n}, \mu_{n+1})$, $n\geq1$. It is clear that these zeros (denoted by $\nu'_{n}$)
% are the eigenvalues of the regular Problem \eqref{N} for $M=0$. Consequently, the interpolation properties
% are simple deductions from the curves of the functions $F(\nu)$ and $M\nu$ (see. Figure 1).\\
%\end{Proof}\\
\begin{proof} {\bf  of Theorem \ref{Tn1}.}
The proof of this theorem is similar to that of Theorem
\ref{T1}. We have only to use instead of the asymptotes
\eqref{g23}, \eqref{g24} and \eqref{g30}, the following asymptotic
estimates (e.g., \cite{FVAA} and \cite[Chapter 6.7]{IGK})
\be\label{gn23}\nu'_{n}=\left(\frac{(n+\frac{1}{2})\pi}{\int_{-1}^{1}
\sqrt{\frac{\rho(x)}{\sigma(x)}}dx}\right)^{2}+\mathcal{O}(1),\ee where \bea
\rho(x)=\left\{
 \begin{array}{ll}
   \rho_{1}(x),~~x\in[-1,0], \\
   \rho_{2}(x),~~x\in[0,1],
\end{array}
 \right.
~and~ \sigma(x)=\left\{
 \begin{array}{ll}
   \sigma_{1}(x),~~x\in[-1,0], \\
   \sigma_{2}(x),~~x\in[0,1].
\end{array}
 \right.
 \eea \bean\alpha_{n}\sim\left(\dfrac{(2n+1)\pi}{2(\gamma_{1}+\gamma_{2})}\right)^{2}~and~
\nu_{n}\sim\left(\dfrac{(2n+1)\pi}{2(\gamma_{1}+\gamma_{2})}\right)^{2},~as~n\rightarrow\infty.\label{gn30}\eean
\end{proof}\\
We give now the asymptotic behaviors of the
eigenfunctions $(\phi_{n}(x))_{n\geq1}$ of Problem \eqref{N}.
As in Section \ref{GD1}, let $\phi_{n}^{u}(x)$ and $\phi_{n}^{v}(x)$ be the restrictions
of the eigenfunctions $\phi_{n}(x)$ to $[-1,0]$ and $[0,1]$,
respectively.
\begin{Prop}
Define the set
$$\Lambda=\left\{n\in\mathbb{N^{*}}~such~that~\mu_{n}\in\Gamma^{*}\right\},$$
where the set $\Gamma^{*}$ is defined by \eqref{gn34}. Then the
associated eigenfunctions $(\phi_{n}(x))_{n\geq1}$ of the eigenvalue
problem \eqref{N} satisfy the following asymptotic estimates:\\
\begin{description}
  \item[i$)$] For $n\in \Lambda$,
  \bean\label{pron1} \left\{
\begin{array}{ll}
\phi_{n}^{u}(x)=-\left(\dfrac{\rho_{1}(x)\sigma_{1}(x)}{\rho_{2}(0)\sigma_{2}(0)}\right)^{-\frac{1}{4}}
\sin(\sqrt{\nu_{n}}\gamma_{2})\sin\left(\sqrt{\nu_{n}}\int_{-1}^{x}
\sqrt{\frac{\rho_{1}(t)}{\sigma_{1}(t)}}dt\right)+\mathcal{O}(\frac{1}{n})\\
\phi_{n}^{v}(x)=\left(\dfrac{\rho_{2}(x)\sigma_{2}(x)}{\rho_{1}(0)\sigma_{1}(0)}\right)^{-\frac{1}{4}}
\cos(\sqrt{\nu_{n}}\gamma_{1})\cos\left(\sqrt{\nu_{n}}\int_{x}^{1}
\sqrt{\frac{\rho_{2}(t)}{\sigma_{2}(t)}}dt\right)+\mathcal{O}(\frac{1}{n}).
\end{array}
 \right.
\eean
  \item[ii$)$] For $n \in \mathbb{N}^{*}\backslash\Lambda$,
  \bean\label{pron2} \left\{
\begin{array}{ll}
\phi_{n}^{u}(x)=\dfrac{\left({\rho_{1}(x)\sigma_{1}(x)}\right)^
{-\frac{1}{4}}}{\left(\rho_{2}(0)\sigma_{2}(0)\right)^{\frac{1}{4}}}\cos\left(\sqrt{\nu_{n}}\gamma_{2}\right)
\sin\left(\sqrt{\nu_{n}}\int_{-1}^{x}{
\sqrt{\frac{\rho_{1}(t)}{\sigma_{1}(t)}}dt}\right)+\mathcal{O}(\frac{1}{n}),\\
\phi_{n}^{v}(x)=\dfrac{\left({\rho_{2}(x)\sigma_{2}(x)}\right)^
{-\frac{1}{4}}}{\left(\rho_{1}(0)\sigma_{1}(0)\right)^{\frac{1}{4}}}\sin\left(\sqrt{\nu_{n}}\gamma_{1}\right)
\cos\left(\sqrt{\nu_{n}}\int_{x}^{1}
\sqrt{\frac{\rho_{2}(t)}{\sigma_{2}(t)}}dt\right)+\mathcal{O}(\frac{1}{n}).
\end{array}
 \right.
\eean
\end{description}
%Moreover, the eigenfunctions
%$\phi_{n}^{v}(1)$  are given as follows
\end{Prop}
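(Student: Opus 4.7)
The plan is to mirror the proof of Proposition \ref{prop1}, splitting the argument according to whether $n\in\Lambda$ or $n\in\mathbb{N}^{\ast}\setminus\Lambda$. In both cases, I would write $\phi_{n}(x)$ explicitly in terms of the Cauchy solutions $u(x,\nu_{n})$ and $v(x,\nu_{n})$ of the initial value problems \eqref{w111} and \eqref{gn7}, and then substitute the Liouville--Green asymptotics \eqref{gnw21}--\eqref{gnw22}.

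For $n\in\Lambda$, the eigenvalue $\nu_{n}$ is common to the two auxiliary problems \eqref{g1} and \eqref{gn2}, so $u(0,\nu_{n})=v(0,\nu_{n})=0$. The transmission condition in \eqref{N} then collapses to $\sigma_{1}(0)u_{x}(0,\nu_{n})=\sigma_{2}(0)v_{x}(0,\nu_{n})$, and the eigenfunction may be chosen in the form
$$
\phi_{n}(x)=\frac{1}{\sqrt{\nu_{n}}}\left\{\begin{array}{ll}\sigma_{2}(0)\, v_{x}(0,\nu_{n})\, u(x,\nu_{n}), & -1\le x\le 0,\\ \sigma_{1}(0)\, u_{x}(0,\nu_{n})\, v(x,\nu_{n}), & 0\le x\le 1.\end{array}\right.
$$
The prefactor $1/\sqrt{\nu_{n}}$ compensates the $\sqrt{\nu}$ growth carried by $u_{x}$ and $v_{x}$ in \eqref{gnw21}--\eqref{gnw22}, so the leading term is of order $O(1)$. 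Substituting those asymptotics and collecting the $(\rho_{i}(0)\sigma_{i}(0))^{\pm 1/4}$ factors delivers \eqref{pron1}.

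For $n\in\mathbb{N}^{\ast}\setminus\Lambda$, one has $u(0,\nu_{n})\neq 0$ and $v(0,\nu_{n})\neq 0$, and by \eqref{gn4} the eigenspace is one-dimensional and generated by $\widetilde{U}(x,\nu_{n})$. Since $u(0,\nu_{n})$ and $v(0,\nu_{n})$ are themselves of order one in view of \eqref{gnw21}--\eqref{gnw22}, it suffices to set $\phi_{n}(x)=\widetilde{U}(x,\nu_{n})$, and direct substitution of the asymptotics yields \eqref{pron2}.

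The argument is essentially a bookkeeping exercise in asymptotic expansions; there is no genuine obstacle, only care required in choosing the correct normalizing power of $\sqrt{\nu_{n}}$ in each of the two cases so that the leading terms become $O(1)$. The only structural difference from the Dirichlet setting of Proposition \ref{prop1} is that, because the Cauchy solutions \eqref{w111}--\eqref{gn7} are $O(1)$ instead of $O(1/\sqrt{\nu})$ (a consequence of the normalizations $u'(-1)=\sqrt{\nu}$ and $v(1)=1$), the remainders in \eqref{pron1}--\eqref{pron2} are $O(1/n)$ rather than $O(1/n^{2})$.
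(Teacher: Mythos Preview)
Your proposal is correct and matches the paper's own proof essentially line for line: the paper also refers back to Proposition~\ref{prop1}, writes $\phi_{n}$ for $n\in\Lambda$ as $\frac{\sigma_{2}(0)}{\sqrt{\nu_{n}}}v_{x}(0,\nu_{n})u(x,\nu_{n})$ and $\frac{\sigma_{1}(0)}{\sqrt{\nu_{n}}}u_{x}(0,\nu_{n})v(x,\nu_{n})$ on the two intervals, takes $\phi_{n}=\widetilde{U}(\cdot,\nu_{n})$ for $n\notin\Lambda$, and substitutes the asymptotics \eqref{gnw21}--\eqref{gnw22}. One small wording issue: the transmission condition does not literally collapse to $\sigma_{1}(0)u_{x}(0,\nu_{n})=\sigma_{2}(0)v_{x}(0,\nu_{n})$ for the Cauchy solutions themselves, but rather determines the ratio $c_{1}/c_{2}$ of the coefficients in $(c_{1}u,c_{2}v)$; your displayed formula for $\phi_{n}$ is nonetheless correct.
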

\begin{Proof}
The proof of the asymptotics \eqref{pron1} and \eqref{pron2}
is similar to that of Proposition \ref{prop1}. Here we have only to use
the asymptotics \eqref{gnw21}, \eqref{gnw22} and the following expressions of
$(\phi_{n}(x))_{n \in \mathbb{N^{*}}}$:
\bean\label{wn}
(\phi_{n}(x))_{n \in \Lambda}= \left\{
\begin{array}{lll}
\frac{\sigma_{2}(0)}{\sqrt{\nu_n}}v_{x}(0,\nu_{n})u(x,\nu_{n}),&-1\leq x\leq 0,\\
\frac{\sigma_{1}(0)}{\sqrt{\nu_n}}u_{x}(0,\nu_{n})v(x,\nu_{n}),&0\leq x\leq 1,\\
\end{array}
\right.
\eean
\begin{equation}\label{ggwn}
(\phi_{n}(x))_{n \in \mathbb{N^{*}}\backslash\Lambda}=\widetilde{U}(x,\nu_{n}),
\end{equation}
where $\widetilde{U}(x,\nu_{n})$ is defined by \eqref{gn4}.
\end{Proof}\\
We may now state our main result.
\begin{Theo}\label{nc1}
Assume that the coefficients $\sigma_{i}(x)$, $\rho_{i}(x)$ and
$q_{i}(x)$ $(i=1,2)$ satisfy \eqref{i1} and \eqref{i2}. Let $ T >
0$, then for any $Y^{0}=(u^{0},v^{0},z^{0})^t\in \mathcal{H}$ there
exists a control $h(t)\in L^{2}(0, T )$ such that the solution
$Y(t,x)=(u(t,x),v(t,x),z(t))^t$ of the control system
(\ref{a})-(\ref{c}) satisfies
 $$u(T,x)=v(T,x)=z(T)=0.$$
\end{Theo}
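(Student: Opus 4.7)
The plan is to follow the strategy of the proof of Theorem \ref{DC1} step by step, with the modifications needed for the Neumann boundary control. First, I introduce the adjoint system corresponding to \eqref{a}-\eqref{c}: the only change compared to \eqref{cda} is that the right terminal boundary condition becomes $\widetilde{v}_x(1,t)=0$. Performing integrations by parts against any transposition-sense solution $Y$ of \eqref{a}-\eqref{c} with $v_x(1,t)=h(t)$, the boundary term at $x=1$ now produces the duality identity
\begin{equation*}
\langle Y(T),\widetilde{Y}^T\rangle_{\mathcal{H}_{-\frac{1}{2}},\mathcal{H}_{\frac{1}{2}}}
=\langle Y^0,\mathbf{S}_T\widetilde{Y}^T\rangle_{\mathcal{H}}
+\sigma_2(1)\int_0^T h(t)\,\widetilde{v}(1,t)\,dt,
\end{equation*}
in which the adjoint trace $\widetilde{v}(1,t)$ replaces the Dirichlet trace $\widetilde{v}_x(1,t)$. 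The verbatim analogues of Lemma \ref{L1} and Proposition \ref{xx} then characterize null controllability at time $T$ by the moment problem: find $w\in L^2(0,T)$, $w(t)=h(T-t)$, such that for every $n\in\mathbb{N}^*$,
\begin{equation*}
\frac{Y_n^0\,\|\widetilde{\phi}_n\|^2}{\sigma_2(1)\,\phi_n^v(1)}\,e^{-\nu_n T}
=\int_0^T w(t)\,e^{-\nu_n t}\,dt.
\end{equation*}

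Next, I control the coefficients of this moment problem by means of the eigenfunction asymptotics \eqref{pron1} and \eqref{pron2}. A direct computation shows $\|\widetilde{\phi}_n\|^2=\mathcal{O}(1)$ (rather than $\mathcal{O}(1/n^2)$ as in \eqref{ccc1}, because the Neumann normalization \eqref{w111}-\eqref{gn7} contains no $1/\sqrt{\nu_n}$ factor). Evaluating the second lines of \eqref{pron1} and \eqref{pron2} at $x=1$ makes the inner $\cos$-factor equal to $1$, yielding for large $n$
\begin{equation*}
\phi_n^v(1)=\begin{cases}
\bigl(\rho_2(1)\sigma_2(1)\bigr)^{-\frac{1}{4}}\bigl(\rho_1(0)\sigma_1(0)\bigr)^{\frac{1}{4}}\cos(\sqrt{\nu_n}\gamma_1)+\mathcal{O}(\tfrac{1}{n}), & n\in\Lambda,\\[4pt]
\bigl(\rho_1(0)\sigma_1(0)\bigr)^{-\frac{1}{4}}\bigl(\rho_2(1)\sigma_2(1)\bigr)^{-\frac{1}{4}}\sin(\sqrt{\nu_n}\gamma_1)+\mathcal{O}(\tfrac{1}{n}), & n\in\mathbb{N}^*\setminus\Lambda.
\end{cases}
\end{equation*}
Using the sub-problem asymptotics adapted to \eqref{g1} and \eqref{gn2} together with Corollary \ref{cgn1}, one verifies that $\sqrt{\nu_n}\gamma_1$ stays at bounded distance from the zero set of the relevant trigonometric function, so that $|\phi_n^v(1)|\geq c>0$ for large $n$. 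Combined with $\nu_n\sim\bigl((2n+1)\pi/(2(\gamma_1+\gamma_2))\bigr)^2$ from Theorem \ref{Tn1}, this yields the Gaussian-type bound
\begin{equation*}
\left|\frac{Y_n^0\,\|\widetilde{\phi}_n\|^2}{\sigma_2(1)\,\phi_n^v(1)}\right|e^{-\nu_n T}\leq C_1 e^{-C_2 n^2}.
\end{equation*}

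Third, I exploit the spectral gap $\nu_{n+1}-\nu_n\geq\delta_2>0$ supplied by Theorem \ref{Tn1} together with $\sum_n 1/\nu_n<\infty$. The classical construction of \cite{DH,DH1,L} then produces a biorthogonal family $(\theta_n)_{n\geq1}\subset L^2(0,T)$ to $(e^{-\nu_n t})_{n\geq1}$ obeying $\|\theta_n\|_{L^2(0,T)}\leq M e^{\omega n}$, and the $e^{-C_2 n^2}$ decay allows the series
\begin{equation*}
w(t)=\sum_{n\in\mathbb{N}^*}\frac{Y_n^0\,\|\widetilde{\phi}_n\|^2}{\sigma_2(1)\,\phi_n^v(1)}\,e^{-\nu_n T}\,\theta_n(t)
\end{equation*}
to converge in $L^2(0,T)$. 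It solves the moment problem, and therefore $h(t)=w(T-t)$ is the desired null control.

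The main obstacle is the lower bound $|\phi_n^v(1)|\geq c>0$. In the Dirichlet case the analogous factor was $(\phi_n^v)_x(1)\sim\sin(\sqrt{\lambda_n}\gamma_1)$; here the roles of $\sin$ and $\cos$ are swapped depending on whether $n\in\Lambda$, and ruling out accidental vanishing of the corresponding trigonometric factors requires a careful use of Corollary \ref{cgn1} together with the sub-problem asymptotics of \eqref{g1} and \eqref{gn2}. Once this lower bound is secured, the exponential decay of $e^{-\nu_n T}$ dominates all polynomial factors and the argument closes exactly as in the proof of Theorem \ref{DC1}.
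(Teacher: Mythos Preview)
Your proposal is correct and follows essentially the same route as the paper: adjoint problem, reduction to the moment problem with observation $\phi_n^v(1)$, eigenfunction asymptotics \eqref{pron1}--\eqref{pron2}, the spectral gap of Theorem~\ref{Tn1}, and the Fattorini--Russell biorthogonal family. The only differences are cosmetic: the paper carries a minus sign in the moment identity \eqref{nc6}, takes the duality pairing directly in $\mathcal{H}$ (see \eqref{n1}) rather than in $\mathcal{H}_{-1/2}\times\mathcal{H}_{1/2}$, and computes $\phi_n^v(1)$ via the exact initial condition $v(1,\nu_n)=1$ from \eqref{gn7} (hence without your extra factor $(\rho_2(1)\sigma_2(1))^{-1/4}$), while being less explicit than you about the lower bound on $|\phi_n^v(1)|$.
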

Let us introduce the adjoint problem to the control system
(\ref{a})-(\ref{c}) \bean\label{nc2} \left\{
 \begin{array}{ll}
   -\rho_{1}(x)\widetilde{u}_{t}=(\sigma_{1}(x)\widetilde{u}_{x})_{x}-q_{1}(x)\widetilde{u},&~~ x\in(-1,0), ~~t>0, \\
   -\rho_{2}(x)\widetilde{v}_{t}=(\sigma_{2}(x)\widetilde{v}_{x})_{x}-q_{2}(x)\widetilde{v},&~~ x\in(0,1), ~~~~t>0, \\
   -M \widetilde{z}_{t}(t)=\sigma_{2}(0)\widetilde{v}_{x}(0,t)-\sigma_{1}(0)\widetilde{u}_{x}(0,t),&~~ t>0, \\
   \widetilde{u}(0,t)=\widetilde{v}(0,t)=\widetilde{z}(t),&~~ t>0,\\
   \widetilde{u}(-1,t)=0, \widetilde{v}_{x}(1,t)=0,&~~ t>0, \\
\end{array}
 \right.
\eean with terminal data at $t = T>0$ is given by \bean \label{cd5}\left\{
\begin{array}{ll}
  \widetilde{u}(x,T)=\widetilde{u}^{T}(x),~~x\in(-1,0), \\
  \widetilde{v}(x,T)=\widetilde{v}^{T}(x),~~x\in(0,1), \\
  \widetilde{z}(T)=\widetilde{z}^{T}.
\end{array}
 \right.
\eean By letting $\widetilde{Y} = (\widetilde{u},
\widetilde{v},\widetilde{z})^t$, the above problem \eqref{nc2}-\eqref{cd5} can be written as a Cauchy problem as \be
\dot{\widetilde{Y}}(t)=\mathcal{A}\widetilde{Y}(t),~~\widetilde{Y}^{T}=\widetilde{Y}(T)\in
\mathcal{H},~~t>0. \label{nc3}\ee Then $\widetilde{Y}\in
C([0,T],~\mathcal{H})$ and is given by
\be{\widetilde{Y}}(t)=\mathbf{S}(T-t)\widetilde{Y}^{T},~~0\leq t
\leq T. \label{nc4}\ee Let $Y$ be a weak solution of the control
problem (\ref{a})-(\ref{c}) with $h(t)\in L^{2}(0, T)$. Then by
integrations by parts, we obtain \bean \label{n1}
<Y(T),\widetilde{Y}^{T}>_{\mathcal{H}}=
<Y(0),\mathbf{S}_{T}\widetilde{Y}^{T}>_{\mathcal{H}}
+\sigma_{2}(1)\int_{0}^{T}h(t)\widetilde{v}(1,t)dt. \eean
%Following \cite{HM}, we define a weak solution to System (\ref{a})-(\ref{c})
%using the method of transposition.
The Cauchy problem (\ref{a})-(\ref{c}) is well posed in the set
$C([0,T], \mathcal{H})$: it has only one solution in
this set and there exists a constant $C > 0$ independent of $h(t)\in
L^{2}(0, T)$ and $Y^{0}\in \mathcal{H}$ such that
$$\|Y\|_{L^{\infty}(0,T;\mathcal{H})}\leq
C(\|Y^{0}\|_{\mathcal{H}}+\|h\|_{L^{2}(0, T)}).$$ As in Section \ref{dc}, we have
the following lemma.
\begin{Lem}The control problem (\ref{a})-(\ref{c}) is null controllable in
time $T > 0$ if and only if, for any $Y^{0}\in \mathcal{H}$ there
exists $h(t) \in L^{2}(0, T )$ such that the following relation
holds \be \label{nc5}
<Y^{0},\mathbf{S}_{T}\widetilde{Y}^{T}>_{\mathcal{H}}=-\sigma_{2}(1)\int_{0}^{T}h(t)\widetilde{v}_{x}(1,t)dt,
\ee for any $\widetilde{Y}^{T}\in\mathcal{H}$, where $\widetilde{Y}$
is the solution to the adjoint problem \eqref{nc3}.
\end{Lem}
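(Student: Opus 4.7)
The plan is to mirror the proof of Lemma \ref{L1} for the Dirichlet case, the only change being that the duality identity \eqref{tr} is replaced by its Neumann analogue \eqref{n1}. Both implications then reduce, via the non-degeneracy of the inner product on $\mathcal{H}$, to reading \eqref{n1} in the appropriate direction.

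For the ``only if'' direction, I would assume that $h\in L^{2}(0,T)$ is a null control, so that $Y(T)=0$, and read off \eqref{nc5} directly from \eqref{n1}. For the converse, I would assume that \eqref{nc5} holds for every $\widetilde{Y}^{T}\in\mathcal{H}$ and subtract it from \eqref{n1}; what remains is
\[
\langle Y(T),\widetilde{Y}^{T}\rangle_{\mathcal{H}}=0 \qquad \text{for every } \widetilde{Y}^{T}\in\mathcal{H},
\]
which forces $Y(T)=0$ since $\mathcal{H}$ is a Hilbert space.

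The main obstacle is the rigorous justification of the duality identity \eqref{n1} itself for an arbitrary $\widetilde{Y}^{T}\in\mathcal{H}$, because the boundary integral at $x=1$ involves a trace of the adjoint solution paired with $h\in L^{2}(0,T)$. My plan here is to exploit the parabolic smoothing of the self-adjoint semigroup $(\mathbf{S}_{t})_{t\ge 0}$ generated by $\mathcal{A}$: for every $t\in[0,T)$, $\widetilde{Y}(t)=\mathbf{S}(T-t)\widetilde{Y}^{T}\in D(\mathcal{A}^{k})$ for every $k\ge 1$, so the relevant trace at $x=1$ is locally square-integrable on $[0,T)$ with enough decay as $t\uparrow T$ coming from standard semigroup estimates. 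I would therefore first check \eqref{n1} for $\widetilde{Y}^{T}\in D(\mathcal{A})$, where it follows from a straightforward integration by parts on $(-1,0)\cup(0,1)$ in space and $(0,T)$ in time, combined with the transmission conditions at $x=0$ (whose point-mass contributions cancel pairwise against the $Mz_{t}$ terms of System \eqref{a} and \eqref{nc2}) and the homogeneous boundary data $\widetilde{u}(-1,t)=0$, $\widetilde{v}_{x}(1,t)=0$.

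A density argument based on $D(\mathcal{A})\subset\mathcal{H}$, together with the well-posedness bound for $Y$ in $C([0,T];\mathcal{H})$ recalled just before the lemma, then extends \eqref{n1} to every $\widetilde{Y}^{T}\in\mathcal{H}$ and closes the argument.
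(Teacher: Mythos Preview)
Your approach is correct and matches the paper's own proof, which simply states that the argument is similar to that of Lemma~\ref{L1}; you in fact supply more detail (the density/smoothing justification of \eqref{n1}) than the paper does. One cosmetic point worth flagging: the printed \eqref{nc5} carries $\widetilde{v}_{x}(1,t)$, whereas \eqref{n1} and the subsequent moment problem \eqref{nc6} make clear that $\widetilde{v}(1,t)$ is intended (indeed $\widetilde{v}_{x}(1,t)\equiv 0$ for the Neumann adjoint \eqref{nc2}), so your ``read off \eqref{nc5} directly from \eqref{n1}'' step goes through once this evident typo is corrected.
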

\begin{Proof}
The proof is similar to that of Lemma \ref{L1}
\end{Proof}\\
Following the ideas of Proposition \ref{xx}, we can prove the
following result.
\begin{Prop}\label{propos2} Problem (\ref{a})-(\ref{c}) is null-controllable in time $T > 0$ if and
only if for any $Y^{0} \in \mathcal{H}$, with Fourier expansion
\bea Y^{0}(x)&=&\sum_{n\in \mathbb{N}^{*}}Y_{n}^{0}\widetilde{\phi}_{n}(x),\eea there
exists a function $w(t)\in L^{2}(0,T)$ such that for all
$n\in\mathbb{N}^{*}$ \be-\frac{Y_{n}^{0}
\|\widetilde{\phi}_{n}\|^{2}}{\sigma_{2}(1)\phi_{n}^{v}(1)}e^{-\la_{n}T}=
\int_{0}^{T}w(t)e^{-\la_{n}t}dt.\label{nc6}\ee The control is given
by $w(t):=h(T-t)$.
\end{Prop}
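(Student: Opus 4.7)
The plan is to mirror the argument given for Proposition \ref{xx} (the Dirichlet case), substituting the Neumann adjoint identity \eqref{nc5} for its Dirichlet analogue \eqref{cd1}. The preceding lemma already tells us that null controllability in time $T$ is equivalent to the existence, for each $Y^0 \in \mathcal{H}$, of a control $h \in L^2(0,T)$ making \eqref{nc5} hold for every terminal datum $\widetilde{Y}^T \in \mathcal{H}$. Since the eigenfunctions $(\widetilde{\phi}_n)_{n\geq 1}$ of the Neumann operator $\mathcal{A}$ form an orthonormal basis of $\mathcal{H}$ (this is the Neumann analogue of Lemma \ref{sss}, already announced in the text), it suffices to test \eqref{nc5} against $\widetilde{Y}^T = \widetilde{\phi}_n$ for each $n \in \mathbb{N}^*$. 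Simultaneously expanding the initial datum as $Y^0 = \sum_{n\geq 1} Y_n^0 \widetilde{\phi}_n$ with $(Y_n^0) \in \ell^2$ provides the right-hand Fourier data of the moment problem \eqref{nc6}.

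The next step is explicit computation. For each $n$, the Cauchy problem \eqref{nc3} with terminal data $\widetilde{\phi}_n$ admits the eigensolution
\[
\widetilde{Y}_n(t,x) = e^{-\nu_n(T-t)}\widetilde{\phi}_n(x),
\]
whose second component evaluated at $x=1$ is $\widetilde{v}_n(1,t) = e^{-\nu_n(T-t)}\phi_n^v(1)$, and which satisfies $\mathbf{S}_T \widetilde{\phi}_n = e^{-\nu_n T}\widetilde{\phi}_n$. Using orthonormality to rewrite
\[
\langle Y^0, \mathbf{S}_T\widetilde{\phi}_n\rangle_{\mathcal{H}} = Y_n^0\, e^{-\nu_n T}\,\|\widetilde{\phi}_n\|^2,
\]
the identity \eqref{nc5} reduces, for each $n$, to
\[
Y_n^0\, e^{-\nu_n T}\,\|\widetilde{\phi}_n\|^2 = -\sigma_2(1)\,\phi_n^v(1)\int_0^T h(t)\, e^{-\nu_n(T-t)}\,dt.
\]
Performing the change of variable $s=T-t$ and setting $w(s) := h(T-s)$ turns the integral into $\int_0^T w(s) e^{-\nu_n s}\,ds$; dividing by $-\sigma_2(1)\phi_n^v(1)$ produces exactly the moment relations \eqref{nc6}. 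Conversely, any $w \in L^2(0,T)$ solving \eqref{nc6} yields, through the same change of variable, a control $h$ satisfying \eqref{nc5} tested against each $\widetilde{\phi}_n$, and hence against every element of $\mathcal{H}$ by density.

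The only point in the argument not purely formal is the legitimacy of dividing by $\phi_n^v(1)$. I would dispose of it by ODE uniqueness: in the Neumann adjoint problem $\phi_n^v$ satisfies $(\phi_n^v)'(1)=0$, so if $\phi_n^v(1)=0$ were also true then $\phi_n^v \equiv 0$ on $[0,1]$ by the Cauchy uniqueness for the second-order equation; the transmission conditions at $x=0$ together with $v(0,\nu_n)=0$ and $v'(0,\nu_n)=0$ would then force $\phi_n^u \equiv 0$ as well, contradicting $\widetilde{\phi}_n \ne 0$. I do not anticipate any substantive obstacle beyond this: the statement is essentially a transcription of Proposition \ref{xx} with the Dirichlet normal trace $(\phi_n^v)_x(1)$ replaced by the Dirichlet trace $\phi_n^v(1)$, reflecting that in the Neumann control case the boundary pairing in the integration by parts is $\int_0^T h(t)\widetilde{v}(1,t)\,dt$ rather than $\int_0^T h(t)\widetilde{v}_x(1,t)\,dt$; the rest of the argument (orthonormal-basis testing, Fourier reduction, time reversal $w(t)=h(T-t)$) goes through verbatim.
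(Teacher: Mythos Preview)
Your proposal is correct and follows exactly the approach the paper itself indicates: it merely states ``Following the ideas of Proposition \ref{xx}, we can prove the following result,'' and your argument is precisely that transcription, replacing the Dirichlet boundary trace $(\phi_n^v)_x(1)$ by the Neumann trace $\phi_n^v(1)$ via the identity \eqref{n1}. Your added justification that $\phi_n^v(1)\neq 0$ by Cauchy uniqueness is a welcome detail the paper only invokes later, in the proof of Theorem \ref{nc1}.
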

\begin{proof} {\bf of Theorem \ref{nc1}.}
From the asymptotics \eqref{pron1} and \eqref{pron2}, it is easy to
show that for large $n\in\mathbb{N}^{*}$ there exists a constant $\mathbf{c}>0$ such that  \bean
\|\widetilde{\phi}_{n}(x)\|^{2}=\|\phi_{n}^{u}(x)\|_{L^{2}[-1,0]}^{2}+
\|\phi_{n}^{v}(x)\|_{L^{2}[0,1]}^{2}+
M|\phi_{n}^{v}(0)|^{2} \leq \mathbf{c}.\label{cccnn1}\eean
From the initial conditions in \eqref{w111}, the
expressions \eqref{wn}, \eqref{ggwn} and  the asymptotics \eqref{gnw21}, we have
\bean\label{pron3} \left\{
\begin{array}{ll}
\phi_{n}^{v}(1)=\left({\rho_{1}(0)\sigma_{1}(0)}\right)^{\frac{1}{4}}
\cos(\sqrt{\nu_{n}}\gamma_{1})+\mathcal{O}(\frac{1}{n}),&n\in\Lambda,\\
\phi_{n}^{v}(1)=(\rho_{1}(0)\sigma_{1}(0))^{-\frac{1}{4}}\sin(\sqrt{\nu_{n}}\gamma_{1})+\mathcal{O}(\frac{1}{n}),&n \in \mathbb{N}^{*}\backslash\Lambda.
\end{array}
 \right.
 \eean Since
$\phi_{n}^{v}(1)\neq0$ for all $n\in \mathbb{N}^{*}$, then
by use of \eqref{cccnn1}, \eqref{pron3} and the asymptotic \eqref{gn30}, we deduce that for
large $n\in\mathbb{N}^{*}$ there exists a constants $C_1>0$ and $C_2>0$ such that
$$\Big{|}\frac{Y_{n}^{0} \|\phi_{n}\|^2}{\sigma_{2}(1)\phi_{n}^{v}(1)}\Big{|}e^{-\la_{n}T}\leq
C_1 e^{-C_2n^2}.$$
By \eqref{gn30}, we have
\bean\sum_{n\in \mathbb{N}^{*}}\frac{1}{\nu_{n}}<\infty.\label{cdnn4}\eean
From \eqref{cdnn4} and Theorem \ref{Tn1}, there
exist a biorthogonal sequence $(\theta_{n}(t))_{n\in
\mathbb{N}^{*}}$ to the family of exponential functions
$(e^{-\la_{n}T})_{n\in \mathbb{N}^{*}}$.
On the other hand,
by \eqref{gn30} together with the
general theory developed in \cite{DH}, there exists constants $M>0$ and $\omega>0$
such that
$$\parallel\theta_{m}(t)\parallel_{L^{2}(0,T)}\leq M
e^{\omega m},~m \in \mathbb{N}^{*},$$ witch
implies the convergence of the series $$w(t)= \sum_{n\in
\mathbb{N}^{*}}-\frac{Y_{n}^{0}
\|\phi_{n}\|^{2}}{\sigma_{2}(1)\phi_{n}^{v}(1)}e^{-\la_{n}T}\theta_{n}(t).$$
Therefore, Theorem \ref{nc1} is a direct consequence of
the convergence of the series $w(t)$ in $L^2(0,T)$ and Proposition \ref{propos2}.
\end{proof}

\vskip 1.5cm

\vskip 3cm


\begin{thebibliography}{99}
\bibitem{GL}G. Alessandrini and L. Escauriaza,
{\it Null-controllability of one-dimensional parabolic equations},
ESAIM, 14(2):284--293, (2008).

\bibitem{FVAA}F. V. Atkinson and A. B. Mingarelli, {\it Asymptotics of the number of zeros and of the
eigenvalues of general weighted Sturm-Liouville problems}, Reine Angew. Math., 375(6):380--393, (1987).

\bibitem{AYJ}A. Benabdallah, Y. Dermenjian and J. Le Rousseau, {\it Carleman estimates for the one-dimensional heat
equation with a discontinuous coeffcient and applications to
controllability and an inverse problem}, J. Math. Anal. Appl.,
336(2):865--887, (2007).

\bibitem{JA}J. Ben Amara and A. A. Shkalikov, {\it A Sturm-Liouville Problem With Physical and Spectral
 Parameters in Boundary Conditions}, Math. Notes., 66(2):163--172, (1999).

\bibitem{CAS} C. Castro, {\it Asymptotic analysis and control of a hybrid system composed
 by two vibrating strings connected by a point mass}, ESAIM, 2:231--280, (1997).

\bibitem{Beam1} C. Castro and E. Zuazua, {\it Boundary controllability of
 hybrid system consisiting in two flexible beams connected by a
 point mass}, SIAM, 36:1576--1595, (1998).

\bibitem{PPJ1} P. Cannarsa, P. Martinez and J. Vancostenoble, {\it Persistent regional null controllability for a class of degenerate
parabolic equations}, Commun. Pure Appl. Anal., 3(4):607--635, (2004).

\bibitem{PPJ} P. Cannarsa, P. Martinez and J. Vancostenoble, {\it Carleman estimates for a class of degenerate parabolic
operators}, SIAM, 47(1):1--19, (2008).

\bibitem{J}J. M. Coron, {\it Control and nonlinearity}, vol. 136 of Mathematical
Surveys and Monographs, AMS, (2007).

\bibitem{EE} E. Fern\'{a}ndez-Cara and E. Zuazua, {\it On the null controllability of the one-dimensional heat equation with BV
      coefficients}, Comput. Appl. Math, 167--190, (2002).

\bibitem{M}M. V. Fedoryuk, {\it Asymptotic Analysis}, Springer-Verlag, (1983).

\bibitem{AO}A. V. Fursikov and O. Y. Imanuvilov, {\it Controllability of evolution
equations}, ser. Lecture Notes Series. Seoul National University,
Research Institute of Mathematics, vol. 34, (1996).

\bibitem{HM1}S. Hansen and J. Martinez, {\it Modeling of a heat equation with
 Dirac density}, Proceedings of Dynamic Systems and Applications, Vol. 7, (2016).

\bibitem{HM}S. Hansen and J. J. Martinez, {\it Null boundary controllability of a 1-dimensional heat equation with
an internal point mass}, arXiv:1506.07940, (2015).

\bibitem{7} A. Hurwitz and R. Courant, {\it Theory of functions}, Springer Verlag,
Berlin, (1964).

\bibitem{IGK} I. Ts. Gohberg and M. G. Krein, {\it Theory and appllications of
Volterra Operators in Hilbert space}, Transl. Math. Mon. 24, Providence, (1970).

\bibitem{J.M} J. Martinez, {\it Modeling and controllability of a heat equation with a point mass},
Graduate Theses and Dissertations, Paper 14917,
\url{http://lib.dr.iastate.edu/etd/14917}, (2015).

\bibitem{AE}A. L\'{o}pez and E. Zuazua, {\it Uniform null controllability for the one dimensional
heat equation with rapidly oscillating periodic density}, Annales
IHP, 19(5):543--580, (2002).

\bibitem{JE1}J. L. Lions and E. Magenes, {\it Probl\`{e}mes aux limites non homog\`{e}nes et applications}, Dunod, (1968).

\bibitem{BI}B. M. Levitan and I. S. Sargsyan, {\it Introduction to spectral theory}, AMS, (1975).

\bibitem{DH}D. Russell and H. O. Fattorini, {\it Exact controllability theorems for linear parabolic equations in one space
dimension}, Arch. Rat. Mech. Anal. 43(4):272--292, (1971).

\bibitem{DH1} D. Russell and H. O. Fattorini, {\it Uniform bounds on biorthogonal functions for real exponentials with an
application to the control theory of parabolic equations}, Quart.
Appl. Math. 32:45--69, (1974).

\bibitem{L} L. Schwartz, {\it \'{E}tude des sommes d'exponentielles}, Hermann, Paris, 1959.

%\bibitem{MG} M. Tucsnak and G. Weiss, {\it Observation and Control for Operator
%Semigroups}, Birkh\"{a}user Advanced Texts, Birkh\"{a}user, Basel,
%Switzerland, (2009).

\bibitem{JE}J. Vancostenoble and E. Zuazua. {\it Null controllability for the heat equation with singular inverse-square
potentials}, J. Funct. Anal., 254(7):1864--1902, (2008).

\bibitem{E.Z} E. Zuazua and S. Hansen, {\it Exact controllability and
stabilization of a vibrating string with an interior point mass},
SIAM, 33:1357--1391, (1995).

\end{thebibliography}
\end{document}